\newtheorem{thm}{Theorem}[section]
\newtheorem{lem}[thm]{Lemma}
\newtheorem{prop}[thm]{Proposition}
\newtheorem{defn}[thm]{Definition}
\newtheorem{cor}[thm]{Corollary}
\newtheorem{rem}[thm]{Remark}
\newtheorem{fig}[thm]{Fig.}
\newtheorem{ackn}{Acknowledgments}
\title{Evolution of spoon-shaped networks}
\author{Alessandra Pluda \thanks {Dipartimento di Matematica, Universit\`{a} di Pisa, 
Largo Bruno Pontecorvo 5, 56127, Pisa, Italy } }
\date{ }
\begin{document}

\maketitle
\begin{abstract} 
\noindent We consider a regular embedded network  composed by two curves, 
one of them closed, in a convex domain $\Omega$. 
The two curves meet only in one point, forming angle of $120$ degrees.
The non-closed curve has a fixed end point on $\partial\Omega$.
We study the evolution by curvature of this network.
We show that the maximal existence time depends only on the area enclosed 
in the initial loop, if the length of the non-closed curve stays bounded from below during the evolution.
Moreover, the closed curve 
shrinks to a point and 
the network is asymptotically approaching,
after dilations and extraction of a subsequence,
a Brakke spoon.
\end{abstract}

\textbf{Mathematics Subject classification (2010).}
53C44 (primary); 53A04, 35K55 (secondary).

\section{Introduction}
In this paper we study the evolution by curvature of two curves, one of them closed,
inside a convex, open, smooth domain $\Omega$.
At the initial time the two curves meet
only at one point, the 3-point $O$.
At this point $O$ the curves form angles of $120$ degrees (Herring condition).
The non-closed curve has one fixed end point on the boundary of $\Omega$.
We call such a network a {\em spoon-shaped network}.\\
The evolution is the formal $L^2$ gradient flow of the total length, 
which is the sum of the lengths of the two curves.\\
The first short time existence result 
of a problem similar to ours is due to Bronsard and Reitich;
the existence of a unique solution for the problem 
of the evolution by curvature of three curves, 
meeting at only one point with prescribed angles,
with  Neumann boundary conditions 
of orthogonal intersection with $\partial\Omega$,
was shown in~\cite{bronsard_reitich1993}.
If the planar network, with the Herring condition imposed at triple junctions,
is initially close to the equilibrium configuration, global solutions of the flow 
were obtained by Kinderlehrer and Liu in~\cite{kinderlehrer_liu}.\\
In~\cite{mantegazza_novaga_tortorelli2004, magni_mantegazza_novaga2013,ilmanen_neves_schulze2014} 
it was investigated the long time behavior of the evolution by curvature 
of a triod, that is, a connected network composed by three curves meeting at a 
common point with angles of $120$ degrees and with fixed end points
on the boundary of a convex domain in $\mathbb{R}^2$. 
In~\cite{mantegazza_novaga_tortorelli2004} the authors proved that at the first singular time, either the curvature blows-up
or the length of at least one of the three curves goes to zero.
In~\cite{magni_mantegazza_novaga2013} it is shown that no singularity
can arise during the evolution and this completes the analysis:
either the inferior limit of the length of at least one of the three curves composing the initial triod
goes to zero at some time T,
or the evolution exists for all time and the evolving triod
tends to the Steiner configuration connecting the three fixed points on the boundary.
The authors mention the possibility to generalize several of the results to regular networks.\\
A precise definition of network is given in~\cite{mantegazza_novaga_pluda_schulze2015} 
but the case of network with loops is not taken into account.\\
In this paper, with the same technique as in~\cite{magni_mantegazza_novaga2013}, 
it is studied the simplest example of a regular network with loops.
We call {\em Brakke spoon} a spoon-shaped network
composed by a half-line which meets a closed curve 
shrinking in a self-similar way during the evolution.\\
The main result is the following:
\begin{thm}\label{thmain}
Given a spoon-shaped network $\Gamma_t$ evolving by
curvature in a strictly convex and open set $\Omega\in\mathbb R^2$,
there exists a finite time $T$ such that either the inferior limit of the 
length of the non-closed curve goes to zero as $t\to T$, 
or there exists a point $x_0\in\Omega$ such that
for a subsequence of rescaled time $\mathfrak t_j$
the associate rescaled network 
tends to a Brakke spoon as $j\rightarrow\infty$.
\end{thm}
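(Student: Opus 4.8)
The plan is to follow the scheme developed for the evolving triod in \cite{mantegazza_novaga_tortorelli2004, magni_mantegazza_novaga2013}, adapted to the presence of the loop. First I would establish short-time existence and uniqueness of a smooth flow $\Gamma_t$ on a maximal interval $[0,T)$ by a linearisation and fixed-point argument in parabolic Hölder spaces, in the spirit of \cite{bronsard_reitich1993} but with a single Dirichlet condition at the fixed endpoint $P\in\partial\Omega$ and the Herring and concurrency conditions at the free triple junction $O=O(t)$; this also gives that at $T$ either the curvature is unbounded or the length of one of the two curves tends to zero.

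Next I would show $T<\infty$. Let $A(t)$ be the area enclosed by the loop. Differentiating $A(t)=\tfrac12\oint(x\,dy-y\,dx)$ along the flow and integrating by parts, the boundary contributions cancel because both ends of the loop move with the common velocity $\dot O$, leaving $A'(t)=-\int_{\mathrm{loop}}\kappa\,ds$. The loop bounds a topological disk whose only corner, at $O$, has interior angle $\tfrac{2\pi}{3}$, so Gauss--Bonnet gives $\int_{\mathrm{loop}}\kappa\,ds=2\pi-\tfrac{\pi}{3}=\tfrac{5\pi}{3}$; hence $A(t)=A(0)-\tfrac{5\pi}{3}t$ and $T\le\tfrac{3A(0)}{5\pi}$. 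If in addition the length of the non-closed curve stays bounded away from zero, the only quantity that can degenerate is the loop, and I would exclude curvature blow-up before $A$ vanishes by bounding the Gaussian density below $2$ and invoking the resulting $\varepsilon$-regularity as in \cite{magni_mantegazza_novaga2013}; since $A$ cannot reach zero with bounded curvature, this forces $T$ to equal the vanishing time $\tfrac{3A(0)}{5\pi}$ of $A$, depending only on $A(0)$.

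For the dichotomy, if $\liminf_{t\to T}$ of the length of the non-closed curve is zero we are in the first alternative. Otherwise it is bounded below, $A(t)\to0$ while $\int_{\mathrm{loop}}\kappa\,ds$ is constant, so the curvature of the loop is unbounded at $T$ and the loop---hence the junction $O(t)$ lying on it---concentrates at a point $x_0$, with $x_0\in\Omega$ by the avoidance principle against $\partial\Omega$, a barrier by convexity. I would then rescale parabolically at $(x_0,T)$: with $\sigma=-\tfrac12\log(T-t)$ set $\Gamma^{x_0}_\sigma=e^{\sigma}\big(\Gamma_{t(\sigma)}-x_0\big)$. In the network form of Huisken's monotonicity formula \cite{mantegazza_novaga_tortorelli2004} the rescaled flow satisfies $\tfrac{d}{d\sigma}\int\rho\,d\mathcal H^1=-\int|\vec\kappa+x^{\perp}|^2\rho\,d\mathcal H^1+(\text{boundary terms})$, where the endpoint term is negligible since $e^{\sigma}|P-x_0|\to\infty$ and the junction term vanishes by the $120^\circ$ condition. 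Integrating in $\sigma$ gives $\int_{-\infty}^{+\infty}\!\int|\vec\kappa+x^{\perp}|^2\rho\,d\mathcal H^1\,d\sigma<\infty$, so along a sequence of rescaled times $\mathfrak t_j\to\infty$ one has $\int|\vec\kappa+x^{\perp}|^2\rho\,d\mathcal H^1\to0$. With the density bound---strictly above $1$, since the collapsing loop carries mass at $x_0$, and below the threshold ruling out higher multiplicity and extra junctions---the local regularity estimates yield $C^1_{\mathrm{loc}}$ subconvergence of $\Gamma^{x_0}_{\mathfrak t_j}$, smooth away from the junction, to a self-shrinker $\Gamma_\infty$ with $\vec\kappa=-x^{\perp}$. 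Its non-closed part is a ray, and a ray is a shrinker only if $x^{\perp}\equiv0$ on it; since the rescaled junction stays bounded and limits onto that ray, the ray emanates from the origin, where it meets at $120^\circ$ the closed part, which is embedded and non-trivial as it carries the loop's mass. Hence $\Gamma_\infty$ is a Brakke spoon, the second alternative.

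The hardest part I expect to be this last step: upgrading varifold convergence of the rescaled flow to genuine $C^1$ convergence up to the moving junction, proving the blow-up limit is non-degenerate with exactly the claimed topology (the loop survives, no multiplicity, no spurious junctions), and the precise estimate of the boundary terms in the monotonicity formula---in particular that the Dirichlet endpoint is asymptotically negligible under rescaling at the interior point $x_0$. This rests on the density and $\varepsilon$-regularity theory for the networks flow, in the spirit of \cite{mantegazza_novaga_tortorelli2004,magni_mantegazza_novaga2013,mantegazza_novaga_pluda_schulze2015}, carefully localised near $O$.
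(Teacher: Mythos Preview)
Your proposal follows the same architecture as the paper---short-time existence, finiteness of $T$ via the area evolution $A'(t)=-5\pi/3$, parabolic rescaling at $(x_0,T)$, Huisken's monotonicity with endpoint control, and classification of the self-shrinking limit---but the logic for producing $x_0$ and pinning down the limit differs. The paper does \emph{not} first show that the loop concentrates at a single point, nor does it establish $T=3A(0)/(5\pi)$ as an input; that equality is a corollary obtained \emph{after} the theorem. Instead the paper considers the whole set $R$ of reachable points, proves (Proposition~\ref{resclimit}) that at every $x_0\in R$ the blow-up along a subsequence is one of four explicit shrinkers (half-line, line, infinite flat triod, Brakke spoon), and argues by contradiction: if the blow-up were never a Brakke spoon, the local regularity statements (line/half-line give local curvature bounds, flat triod gives a global $L^2$ bound) would force the curvature to stay bounded up to $T$, contradicting Theorem~\ref{infinitecurvature}. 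Your more direct route---identify $x_0$ as the collapse point of the loop and argue the limit must carry a nontrivial closed component---can be made to work, essentially because the rescaled loop area is the constant $5\pi/6$ once $T=3A(0)/(5\pi)$ is known, but as written your derivation of that equality via ``density below $2$ plus $\varepsilon$-regularity'' is circular: you would need exactly the blow-up classification you are trying to establish.

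The substantive omission is the mechanism for multiplicity one and embeddedness of the blow-up limit. You appeal to a ``density bound\ldots below the threshold ruling out higher multiplicity and extra junctions,'' but a Gaussian density bound by itself does not exclude, for instance, a multiplicity-two line or overlapping sheets in the $C^1_{\mathrm{loc}}$ limit; and without unit multiplicity and embeddedness the shrinker classification collapses. The paper handles this with a separate scale-invariant isoperimetric quantity $E(t)$ (Section~3, in the spirit of Hamilton and Huisken's distance-comparison principles, with a new case for pairs of points both on the closed curve), proves it is monotone while small, hence uniformly positive, and shows it passes to $C^1_{\mathrm{loc}}$ limits (Proposition~\ref{emb}, Lemma~\ref{emb2}); this is what forces the limit to be embedded with multiplicity one. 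Once that is in place, the identification of the non-flat case as the Brakke spoon is not argued from scratch but quoted from the uniqueness theorem of Chen--Guo~\cite{chen_guo}. You should incorporate both ingredients.
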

This result can be seen as an analogue of Grayson's theorem for a closed curve,
which says that an embedded evolving curve in $\mathbb{R}^2$
becomes eventually convex without developing singularities 
and then shrinks to a point (see~\cite{gage_hamilton1986, grayson1987}).
A new proof of this theorem is given in~\cite{magni_mantegazza2014}.\\ 
In Proposition~\ref{resclimit}, we obtain, up to subsequence,
as possible limits, sets that satisfy the equation 
$k+\left\langle x\vert \nu \right\rangle=0$ for all $x$,
where $k$ is the curvature and $\nu$ is the unit normal vector.
Satisfying the previous equation, 
the sets shrink homothetically.
The existence and uniqueness of self-similar shrinking solution
of a problem similar to ours is proved in~\cite{chen_guo}:
Chen and Guo consider an equivalent system that describes the motion by curvature, 
but they focus on the evolution of a curve symmetric about the $x-$axis, and
consider the part of the curve in the upper half-plane,
which forms fixed angles with the axis.
In Proposition~\ref{resclimit}, to gain existence of a unique limit set,
without self-intersections, with multiplicity one,
with curvature not constantly zero, we will apply \cite[Theorem~3]{chen_guo},
as our asymptotic solution is one of the Abresch-Langer curves,
symmetric and convex.\\

\begin{ackn}
The author is warmly grateful to Matteo Novaga for the suggestion of the problem and 
for his constant support and encouragement. 
The author is very grateful to the anonymous referees for their valuable comments, 
which highly improved the paper.\\
The author is partially supported by the
PRIN 2010-2011 project ``Calculus of Variations" 
and by GNAMPA of INDAM.
\end{ackn}

\section{Definitions and preliminaries}
For any $C^2$ curve $\sigma:[0,1]\rightarrow\mathbb{R}^2$ we will denote with
$s$ its arclength parameter, defined by
$s(x)=\int_0^x\vert\sigma_x(\xi)\vert\,d\xi$,
with $\tau=\tau(x)= \frac{\sigma_x }{\vert\sigma_x \vert}$ its unit tangent vector and with
$\nu=\nu(x)= {\mathrm R} \tau(x)={\mathrm R} \frac{\sigma_x}{\vert\sigma_x \vert}$
its unit normal vector, 
where ${\mathrm R}:\mathbb R^2\to\mathbb R^2$ is the counterclockwise rotation of $\pi/2$.\\
The curvature at the point $\sigma(x)$ will be 
$k=k(x)= \frac{\langle\sigma_{xx}\,\vert\,\nu\rangle}
{{\vert{\sigma_x}\vert}^2}=
\langle\partial_s\tau\,\vert\,\nu\rangle=
-\langle\partial_s \nu\,\vert\,\tau\rangle$.\\
Moreover, we set $\underline{k}=k\nu$.\\
A curve $\sigma:[0,1]\rightarrow\mathbb{R}^2$ is called regular if $\sigma_x\not =0$ for all $x\in [0,1]$.

\begin{defn}
Let $\Omega$ be a smooth, open and convex subset of $\mathbb{R}^2$;
a \emph{spoon-shaped network} $\Gamma$ is defined as: 
$\Gamma=\sigma^1([0,1])\cup \sigma^2([0,1]) $,
where $\sigma^1$ and $\sigma^2$ are 
two regular, at least $C^2$ curves contained in $\Omega$:\\ 
$\sigma^1:\left[0,1 \right]\rightarrow \overline {\Omega} \,,$ 
with $\sigma^1(0)=\sigma^1(1)\,,$\\
$\sigma^2:\left[ 0,1 \right]\rightarrow \overline{\Omega}\,.$\\
This network has only one point on the boundary of $\Omega$, $\sigma^2(1)=P\in \partial\Omega$,
which we call the \emph{end point}  of the network.\\
The two curves intersect each other only at one point
$\sigma^1(0)=\sigma^1(1)=\sigma^2(0)=O$,
the \emph{$3-$point} of the network,
moreover $O\notin\partial\Omega$.
At this junction point an angular condition is required, 
namely the unit tangent vectors to the
curves form angles of $120$ degrees,
that is, $\tau^1(0)-\tau^1(1)+\tau^2(0)=0\,.$\\
We require that the network is embedded, that means
\begin{equation*}\label{embedded}
\begin{cases}
\begin{array}{ll}
\sigma^1(x)\ne \sigma^1(y)\qquad &\text{ for}\  x,y\in (0,1), x\neq y\,,\\
\sigma^2(x)\ne \sigma^2(y)\qquad &\text{ for}\  x,y\in (0,1), x\neq y\,,\\
\sigma^1(x)\ne \sigma^2(y) \qquad &\text{ for}\  x,y\in (0,1), x\neq y\,.\\
\end{array}
\end{cases}
\end{equation*}

\end{defn}

\begin{rem}\rm{
If $\Omega$ is not convex, it could happen that during the 
evolution the network intersects the boundary of $\Omega$ in a point 
different from the end point $P$. On the other hand, as a consequence of 
the strong maximum principle for parabolic equations, this cannot happen 
if $\Omega$ is convex.}
\end{rem}

The length of the curves is
$L_i=\int_0^1\vert \sigma^i_x(\xi) \vert d\xi=\int_{\sigma^i}ds\,,$
the global length of the network is $L=L_1+L_2\,$
and we call $A$ the area enclosed in the loop.\\

\noindent The definitions given above of unit tangent vector, unit normal vector and curvature vector
are the same for time depending curves $\gamma(x,t)$.\\
We define a spoon-shaped network $\Gamma_t=\gamma^1(x,t)\cup\gamma^2(x,t)$
as a union of two time depending curves $\gamma^1(x,t)$ and $\gamma^2(x,t)$ above,
with the same properties of  $\sigma^1(x)$ and $\sigma^2(x)$,
for all the times at which they are defined. 

\noindent
For a given initial network $\Gamma_0$, we define the following evolution problem:
\begin{defn} 
We say that a family 
$\Gamma_t=\gamma^1(\cdot, t)\bigcup\gamma^2(\cdot, t)$ 
of spoon-shaped networks evolves
by curvature in the time interval $[0,T)$
if the functions $\gamma^1:[0,1]\times[0,T)\to\overline{\Omega}$ and
$\gamma^2:[0,1]\times[0,T)\to\overline{\Omega}$
are of class $C^2$ in space, $C^1$ in time and satisfy
the following quasilinear parabolic system
\begin{equation}\label{problema}
\begin{cases}
\gamma_x^i(x,t)\not=0\quad &\text{ regularity}\\
\gamma^1(x,t)=\gamma^1(y,t)\quad \text{{ iff} $x=y$ or $x,y\in \{0,1\}$}\\
\gamma^2(x,t)\not=\gamma^2(y,t)\quad \text{{ if} $x\not=y$}&\text{ simplicity}\\
\gamma^1(x,t)=\gamma^2(y,t)\quad \text{{ iff} $x\in \{0,1\},y\in \{0\}$}
&\text{ intersection only  at the 3-point}\\
\tau^1(0)-\tau^1(1)+\tau^2(0)=0 \quad&\text{ angles of $120^\circ$ at the 3-point}\\
\gamma^2(1,t)=P\quad &\text{ fixed end point condition}\\
\gamma^i(x,0)=\sigma^i(x)\quad &\text{ initial data}\\
\gamma^i_t(x,t)=\frac{\gamma_{xx}^i(x,t)}{{\vert{\gamma_x^i(x,t)}\vert}^2}\quad
&\text{ motion by curvature}
\end{cases}
\end{equation}
for every $x\in[0,1]$, $t\in[0,T)$ and $i\in\{1, 2\}$.
\end{defn}

\begin{rem}
\rm{We can set an analogous Neumann problem requiring that the end point
of the non-closed curve
$\gamma^2(1,t)$ intersects orthogonally the boundary of $\Omega$.} 
\end{rem}

Often we will denote the flow with $\Gamma_t$
and we will describe $\Gamma_t$ as a map
$F:\Gamma\rightarrow\overline{\Omega}$ from a fixed standard spoon-shaped network $\Gamma$
in $\mathbb{R}^2$. 
The evolution will be given by a map 
$F:\Gamma\times\left[0,T \right)\rightarrow\overline{\Omega}\,, $
so $\Gamma_t=F\left(\Gamma,t \right)\,.$\\

\begin{rem}
\rm {In the evolution Problem~\eqref{problema} we define
the velocity of the point $\gamma^i(x,t)$ as 
$$\underline{v}^i(x,t)= \frac{\gamma_{xx}^i(x,t)}{{\vert{\gamma_x^i(x,t)}\vert}^2}\,.$$
Notice that $\langle\underline{v}^i,\nu^i\rangle$ is the curvature of the curve 
$\gamma^i$.}
\end{rem}

\begin{lem} 
If $\gamma$ satisfies the equation
\begin{equation}\label{eveq}
\gamma_t=\frac{\gamma_{xx}}{{\vert\gamma_x\vert}^2}=\lambda\tau+k\nu\,,
\end{equation}
then the following commutation rule holds,
\begin{equation}\label{commut}
\partial_t\partial_s=\partial_s\partial_t +
(k^2 -\lambda_s)\partial_s\,.
\end{equation}
\end{lem}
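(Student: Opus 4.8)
The plan is to derive the commutation rule by a direct computation, comparing the two mixed partial derivatives of an arbitrary smooth function (or vector field) along the curve. Write $\partial_s = \frac{1}{|\gamma_x|}\partial_x$, so that the only quantity through which time-dependence enters is the speed $|\gamma_x|$. The key preliminary step is therefore to compute $\partial_t |\gamma_x|$. Using $|\gamma_x|^2 = \langle \gamma_x, \gamma_x\rangle$ and the evolution equation~\eqref{eveq}, I would exchange $\partial_t$ and $\partial_x$ on $\gamma$, obtaining $\partial_t \gamma_x = \partial_x(\lambda\tau + k\nu)$; pairing this with $\tau = \gamma_x/|\gamma_x|$ and using $\partial_x \tau = |\gamma_x|\,\partial_s\tau = |\gamma_x| k\nu$ together with $\langle \tau,\nu\rangle = 0$, the $k\nu$ term drops and one is left with $\partial_t|\gamma_x| = (\lambda_s - k^2)|\gamma_x|$, i.e. $\partial_t \log|\gamma_x| = \lambda_s - k^2$.

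From here the commutation rule is immediate: for any smooth $f$,
\begin{equation*}
\partial_t\partial_s f = \partial_t\!\left(\frac{1}{|\gamma_x|}\partial_x f\right)
= -\frac{\partial_t|\gamma_x|}{|\gamma_x|^2}\partial_x f + \frac{1}{|\gamma_x|}\partial_x\partial_t f
= -(\lambda_s - k^2)\,\partial_s f + \partial_s\partial_t f,
\end{equation*}
where in the last step I used $\partial_x\partial_t f = \partial_t\partial_x f$ (ordinary mixed partials commute) and $\frac{1}{|\gamma_x|}\partial_x = \partial_s$. Rearranging gives exactly $\partial_t\partial_s = \partial_s\partial_t + (k^2 - \lambda_s)\partial_s$ as claimed.

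There is no serious obstacle here; the statement is essentially a bookkeeping identity. The one point that deserves care is the sign in $\partial_t|\gamma_x|$ — it is easy to pick up the wrong sign when differentiating $\tau = \gamma_x/|\gamma_x|$ and using the Frenet relation $\partial_s\tau = k\nu$ — so I would double-check it against the expected consequence that, for a shrinking curve, arclength decreases. I would also remark that the identity holds verbatim when $f$ is vector-valued (e.g. applied later to $\tau$, $\nu$, or $\underline{k}$), since the derivation never uses scalarity of $f$.
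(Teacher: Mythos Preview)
Your proof is correct and is precisely the ``direct computation'' the paper alludes to (the paper itself gives no details beyond citing \cite{mantegazza_novaga_tortorelli2004}, Lemma~2.6). The key step $\partial_t|\gamma_x| = (\lambda_s - k^2)|\gamma_x|$ and the subsequent application of the product rule to $\partial_s = |\gamma_x|^{-1}\partial_x$ are exactly the standard route, and your sign check is prudent.
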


\begin{proof}
The thesis follows by direct computation, 
see~\cite{mantegazza_novaga_tortorelli2004}, Lemma~2.6.
\end{proof}

\begin{lem}
The evolution of the length of the curves is given by 
\begin{align*}
\frac{\partial L_i}{\partial t}
&\,=-\int_{\gamma^i} (k^i)^2-\lambda^i_s\,ds\,,\\
\frac{\partial L}{\partial t}
&=\lambda^1(1,t)-\lambda^1(0,t)-\lambda^2(0,t)-\int_{\Gamma_t} k^2\,ds
\end{align*}
and the evolution of the area enclosed in the loop is
\begin{equation}\label{area}
\frac{\partial A}{\partial t}=-\frac{5\pi}{3}\,.
\end{equation}
\end{lem}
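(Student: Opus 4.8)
The plan is to treat the three identities separately, deriving the first by a direct computation with the commutation rule~\eqref{commut}, the second by summing and carefully accounting for the boundary contributions at the $3$-point and at the end point $P$, and the third by a Gauss--Bonnet / divergence-theorem argument for the enclosed region. For the evolution of $L_i$, I would start from $L_i=\int_0^1|\gamma^i_x|\,dx$ and compute $\partial_t|\gamma^i_x|$ using $\gamma^i_t=\lambda^i\tau^i+k^i\nu^i$; the standard identity is $\partial_t(ds)=(\lambda^i_s-(k^i)^2)\,ds$, which follows immediately from~\eqref{commut} applied to the constant function. Integrating over $\gamma^i$ gives $\frac{\partial L_i}{\partial t}=\int_{\gamma^i}(\lambda^i_s-(k^i)^2)\,ds$, which is the first formula.

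For the total length $L=L_1+L_2$, I would sum the two expressions and integrate the $\lambda^i_s$ terms, obtaining boundary terms $\lambda^i$ evaluated at the endpoints of each curve. For $\gamma^2$ the arclength runs from the $3$-point $O$ to $P$, contributing $\lambda^2(1,t)-\lambda^2(0,t)$; for the loop $\gamma^1$ both endpoints $x=0$ and $x=1$ sit at $O$, contributing $\lambda^1(1,t)-\lambda^1(0,t)$. At the end point $P$ the curve is fixed, so the normal velocity vanishes there; one must check that this forces $\lambda^2(1,t)=0$ as well, or else that the fixed-endpoint condition is what makes the term drop. Here one should be slightly careful: the formula as stated keeps $\lambda^1(1,t)-\lambda^1(0,t)-\lambda^2(0,t)$ but \emph{not} a $\lambda^2(1,t)$ term, so the argument is that differentiating $\gamma^2(1,t)=P$ in time gives $\gamma^2_t(1,t)=0$, hence $\lambda^2(1,t)=0$. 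Collecting the surviving terms and the $-\int_{\Gamma_t}k^2\,ds$ from the $(k^i)^2$ contributions yields the second identity.

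For the area formula~\eqref{area}, I would write $A(t)=\frac12\int_{\gamma^1}\langle\gamma^1,\nu^1\rangle\,ds$ (or the equivalent $-\int\!\!\int_{\text{loop}}$ with the standard orientation) and differentiate under the integral, using that the boundary of the loop moves with normal velocity $k^1$ plus a tangential part that does not affect the enclosed area. The first-variation-of-area formula gives $\frac{\partial A}{\partial t}=-\int_{\gamma^1}k^1\,ds$, where the sign and the orientation must be pinned down so that a shrinking loop has decreasing area. The total curvature $\int_{\gamma^1}k^1\,ds$ of the loop is then computed by the rotation-index / Gauss--Bonnet theorem for a curve with one corner: a smoothly closed curve would contribute $2\pi$, and the single $120^\circ$ junction (exterior angle) subtracts a correction. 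The interior angle of the loop at $O$ is $\pi/3$ (since the two tangent vectors $\tau^1(0)$ and $-\tau^1(1)$, i.e.\ the directions along the loop, form the complement of the $120^\circ$ opening toward $\gamma^2$), so the exterior angle is $2\pi/3$, giving $\int_{\gamma^1}k^1\,ds=2\pi-2\pi/3$ \emph{or} $2\pi+\text{something}$ depending on convexity bookkeeping; the target value $5\pi/3$ forces the combination $2\pi - \pi/3 = 5\pi/3$, i.e.\ the turning of the smooth part is $2\pi$ minus the $\pi/3$ interior turning deficit at the corner. I would make this precise by the rotation-index theorem with corners.

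The main obstacle I expect is the careful bookkeeping of angles and signs in the Gauss--Bonnet step for the third identity: getting the orientation of $\gamma^1$, the sign convention for $k^1$ relative to $\nu^1=\mathrm{R}\tau^1$, and the exterior-angle contribution of the $120^\circ$ junction all mutually consistent so that the constant comes out to exactly $-\tfrac{5\pi}{3}$ and not, say, $-\tfrac{\pi}{3}$ or $-\tfrac{7\pi}{3}$. The length computations are routine once the vanishing of $\lambda^2(1,t)$ at the fixed end point is justified; everything else is an application of~\eqref{commut} and integration by parts along each curve.
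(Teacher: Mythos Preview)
Your proposal is correct and follows the same approach as the paper, which simply cites~\cite{mantegazza_novaga_pluda_schulze2015,mantegazza_novaga_tortorelli2004} for the length identities (where the computation via $\partial_t(ds)=(\lambda_s-k^2)\,ds$ and the vanishing of $\lambda^2(1,t)$ at the fixed end point are carried out exactly as you describe) and invokes the Gauss--Bonnet theorem for~\eqref{area}. One small correction to your angle bookkeeping: the \emph{interior} angle of the loop at $O$ is $2\pi/3$, not $\pi/3$, since the two outgoing directions $\tau^1(0)$ and $-\tau^1(1)$ along the loop are two of the three unit vectors at $120^\circ$ (the third being $\tau^2(0)$); hence the exterior (turning) angle is $\pi-2\pi/3=\pi/3$, and the rotation-index theorem gives $\int_{\gamma^1}k^1\,ds=2\pi-\pi/3=5\pi/3$, yielding $\partial_t A=-5\pi/3$ with the orientation in which the loop encloses positive area.
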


\begin{proof}
The evolution of the length of the curves and of the total length of the
network can be seen as a particular case of a more general situation, studied 
in~\cite{mantegazza_novaga_pluda_schulze2015} or in~\cite{mantegazza_novaga_tortorelli2004}.
The evolution of the area is an application of the Gauss-Bonnet theorem.
\end{proof}

\begin{lem}
At the $3-$point of a smooth spoon-shaped network $\Gamma_t$ evolving as in 
Problem~\eqref{problema}, there hold:\\
\begin{align}\label{cond1}
\lambda^1(0,t)&=\frac{k^{1}(1,t)+k^{2}(0,t)}{\sqrt{3}}\,,\,\nonumber\\
\lambda^1(1,t)&=\frac{k^{2}(0,t)-k^{1}(0,t)}{\sqrt{3}}\,,\,\nonumber\\
\lambda^2(0,t)&=\frac{-k^{1}(0,t)-k^{1}(1,t)}{\sqrt{3}}\,,\nonumber\\
\end{align}
\begin{align}\label{cond2}
k^1(0,t)&=\frac{-\lambda^{1}(1,t)-\lambda^{2}(0,t)}{\sqrt{3}}\,,\,\nonumber\\
k^1(1,t)&=\frac{\lambda^{1}(0,t)-\lambda^{2}(0,t)}{\sqrt{3}}\,,\,\nonumber\\
k^2(0,t)&=\frac{\lambda^{1}(0,t)+\lambda^{1}(1,t)}{\sqrt{3}}\,,\nonumber\\
\end{align}
which imply
\begin{align}\label{cond3}
\lambda^1(0,t)-\lambda^1(1,t)+\lambda^2(0,t)=0\,,\\
k^1(0,t)-k^1(1,t)+k^2(0,t)=0\,.\nonumber
\end{align}
Hence,
\begin{equation*}
\frac{dL}{dt}=-\int_{\Gamma_t}k^2\,ds\,.
\end{equation*}
Moreover
\begin{equation}\label{cond4}
k^1_s(0,t)+\lambda^1(0,t)k^1(0,t)=k^1_s(1,t)+\lambda^1(1,t)k^1(1,t)
=k^2_s(0,t)+ \lambda^2(0,t)k^2(0,t)\,.
\end{equation}
\end{lem}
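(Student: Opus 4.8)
The plan is to derive all of~\eqref{cond1}--\eqref{cond4} from two structural facts: the three curve‑ends meeting at the $3$‑point move with a common velocity, and the $120^\circ$ condition (differentiated in time for~\eqref{cond4}).

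First I would observe that, since $\gamma^1(0,t)=\gamma^1(1,t)=\gamma^2(0,t)$ for all $t$, the three velocities at the junction coincide. By the motion equation each $\gamma^i_t=\gamma^i_{xx}/|\gamma^i_x|^2$ decomposes as $\lambda^i\tau^i+k^i\nu^i$ in the form~\eqref{eveq}, so, writing $V$ for the common velocity of the $3$‑point,
\[
V=\lambda^1(0,t)\tau^1(0)+k^1(0,t)\nu^1(0)=\lambda^1(1,t)\tau^1(1)+k^1(1,t)\nu^1(1)=\lambda^2(0,t)\tau^2(0)+k^2(0,t)\nu^2(0),
\]
and taking inner products with the relevant $\tau^i,\nu^i$ gives $\lambda^i=\langle V,\tau^i\rangle$, $k^i=\langle V,\nu^i\rangle$ at the three ends. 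Next I would fix an orthonormal frame with $\tau^1(0)=e_1$: the angle condition $\tau^1(0)-\tau^1(1)+\tau^2(0)=0$ then pins down $\tau^1(1)$ and $\tau^2(0)$ as the two unit vectors forming the prescribed $120^\circ$ angles, hence the normals via the rotation $\mathrm R$, so the six inner products above become explicit linear expressions in the two components of $V$. Substituting these expressions verifies~\eqref{cond1} and~\eqref{cond2} directly; pairing $V$ with the zero vector $\tau^1(0)-\tau^1(1)+\tau^2(0)$ and with its $\mathrm R$‑image $\nu^1(0)-\nu^1(1)+\nu^2(0)=0$ gives the two relations in~\eqref{cond3}; and inserting $\lambda^1(0,t)-\lambda^1(1,t)+\lambda^2(0,t)=0$ into the expression for $\partial L/\partial t$ from the preceding lemma yields $dL/dt=-\int_{\Gamma_t}k^2\,ds$.

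For~\eqref{cond4} I would differentiate the angle condition in time. Along a curve evolving by~\eqref{eveq} one has $\partial_t\tau=(k_s+\lambda k)\nu$, a short computation in the spirit of~\eqref{commut} (cf.~\cite{mantegazza_novaga_tortorelli2004}); hence, abbreviating $\mu^i:=k^i_s+\lambda^i k^i$ at each end, differentiating $\tau^1(0)-\tau^1(1)+\tau^2(0)=0$ gives
\[
\mu^1(0,t)\,\nu^1(0)-\mu^1(1,t)\,\nu^1(1)+\mu^2(0,t)\,\nu^2(0)=0.
\]
Since $\mathrm R$ applied to the angle condition shows the three normals satisfy $\nu^1(0)-\nu^1(1)+\nu^2(0)=0$ as well, subtracting $\mu^2(0,t)$ times this from the displayed identity and using that any two of the three normals are linearly independent (the angles being $60^\circ/120^\circ$, not $0$) forces $\mu^1(0,t)=\mu^1(1,t)=\mu^2(0,t)$, which is~\eqref{cond4}.

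I do not expect a serious obstacle: the mathematical content is the two facts above, and the rest is linear algebra at the junction. The care lies in the bookkeeping — fixing the coordinate and orientation conventions so that the signs in~\eqref{cond1}--\eqref{cond2} come out as stated (in particular, that the outward tangent of the loop at $x=1$ is $-\tau^1(1)$, so its velocity is $\lambda^1(1,t)\tau^1(1)+k^1(1,t)\nu^1(1)$), and in invoking $\partial_t\tau=(k_s+\lambda k)\nu$ together with the time‑differentiability of the boundary conditions, both legitimate under the smoothness assumed on $\Gamma_t$.
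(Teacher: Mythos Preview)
Your proposal is correct and follows essentially the same approach as the paper: differentiate the concurrency condition in time to obtain a common velocity at the $3$--point, extract the $\lambda$'s and $k$'s by pairing with the tangent/normal frame determined by the $120^\circ$ condition, and then differentiate the angular condition (using $\partial_t\tau=(k_s+\lambda k)\nu$) to get~\eqref{cond4}. Your presentation via a fixed frame and the single vector $V$ is a slightly cleaner packaging of the same linear algebra the paper carries out with six scalar products, and your argument for~\eqref{cond4} via linear independence of the normals makes explicit what the paper leaves implicit.
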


\begin{proof}
In the system that describes the Problem~\eqref{problema},
at the $3-$point it is required the concurrency condition 
\begin{equation}\label{concurrency}
\gamma^1(1,t)=\gamma^1(0,t)=\gamma^2(0,t)\,.
\end{equation}
If we differentiate in time~\eqref{concurrency},  we obtain 
\begin{align}
\lambda^1(1,t)\tau^1(1,t)+k^1(1,t)\nu^1(1,t)&=\lambda^1(0,t)\tau^1(0,t)+k^1(0,t)\nu^1(0,t)\,,\label{11}\\
\lambda^1(0,t)\tau^1(0,t)+k^1(0,t)\nu^1(0,t)&=\lambda^2(0,t)\tau^2(0,t)+k^2(0,t)\nu^2(0,t)\,,\label{12}\\
\lambda^2(0,t)\tau^2(0,t)+k^2(0,t)\nu^2(0,t)&= \lambda^1(1,t)\tau^1(1,t)+k^1(1,t)\nu^1(1,t)\label{13}\,.
\end{align}
Taking the scalar product of~\eqref{11} by $\tau^2(0,t)$ and by $\lambda^2(0,t)$, 
of~\eqref{12} by $\tau^1(1,t)$ and by $\lambda^1(1,t)$, of~\eqref{13} by $\tau^1(0,t)$ and $\lambda^1(0,t)$
and using the angular condition $\tau^1(0,t)-\tau^1(1,t)+\tau^2(0,t)=0$
we get 
\begin{align*}
-\frac{1}{2}\lambda^1(0,t)-\frac{\sqrt{3}}{2}k^1(0,t)&=\frac{1}{2}\lambda^1(1,t)-\frac{\sqrt{3}}{2}k^1(1,t)\,,\nonumber\\
\frac{\sqrt{3}}{2}\lambda^1(0,t)-\frac{1}{2}k^1(0,t)&=\frac{\sqrt{3}}{2}\lambda^1(1,t)+\frac{1}{2}k^1(1,t)\,,\nonumber\\
\frac{1}{2}\lambda^1(1,t)+\frac{\sqrt{3}}{2}k^1(1,t)&=-\frac{1}{2}\lambda^2(0,t)+\frac{\sqrt{3}}{2}k^2(0,t)\,,\nonumber\\
-\frac{\sqrt{3}}{2}\lambda^1(1,t)+\frac{1}{2}k^1(1,t)&=-\frac{\sqrt{3}}{2}\lambda^2(0,t)-\frac{1}{2}k^2(0,t)\,,\nonumber\\
\frac{1}{2}\lambda^2(0,t)+\frac{\sqrt{3}}{2}k^2(0,t)&=\frac{1}{2}\lambda^1(0,t)-\frac{\sqrt{3}}{2}k^1(0,t)\,,\nonumber\\
-\frac{\sqrt{3}}{2}\lambda^2(0,t)+\frac{1}{2}k^2(0,t)&=-\frac{\sqrt{3}}{2}\lambda^1(0,t)+\frac{1}{2}k^1(0,t)\,,\nonumber\\
\end{align*}
from which we obtain~\eqref{cond1} and~\eqref{cond2}.
Summing up the previous conditions we have~\eqref{cond3}.
Differentiating in time the angular condition at the $3-$point
$\tau^1(0,t)-\tau^1(1,t)+\tau^2(0,t)=0$,
using the commutation rule~\ref{commut},
which implies $\partial_t\tau=\left(k_s+k\lambda\right)\nu$,
we have~\eqref{cond4}.
\end{proof}

\subsection{Short time existence}
\begin{defn}
We say that for Problem~\eqref{problema} the {\em compatibility conditions} (of every order) are satisfied if
at the end--point and at the 3--point there hold all the relations on the space derivatives 
of the functions $\sigma^i$ obtained differentiating (any number of times) in time the  boundary conditions
and substitute them with the space derivatives, given by using the evolution equation~\eqref{eveq}. 
\end{defn}

\begin{rem}\label{compc} \rm{
Explicitly, for Problem~\eqref{problema}, the compatibility conditions 
of order $2$ are
\begin{equation*}
\frac{\sigma_{xx}^2(1)}{|\sigma_x^2(1)|^2}=0 \,,
\end{equation*}
and
\begin{equation*}
\frac{\sigma_{xx}^1(0)}{|\sigma_x^1(0)|^2}=\frac{\sigma_{xx}^1(1)}{|\sigma_x^1(1)|^2}\;,
\frac{\sigma_{xx}^1(0)}{|\sigma_x^1(0)|^2}=\frac{\sigma_{xx}^2(0)}{|\sigma_x^2(0)|^2}\;,
\frac{\sigma_{xx}^2(0)}{|\sigma_x^2(0)|^2}=\frac{\sigma_{xx}^1(1)}{|\sigma_x^1(1)|^2}\;.
\end{equation*}}
\end{rem}

\smallskip 

\begin{thm}
If  $\Gamma_0$ is an initial $C^{2+2\alpha}$ network
(with $0<\alpha<1/2$) satisfying the compatibility conditions of order $2$,
then, there exists a unique solution 
$\Gamma \in C^{2+2\alpha,1+\alpha}\left(\left[0,1 \right]\times \left[0,T\right) \right)$
of Problem~\eqref{problema}.
\end{thm}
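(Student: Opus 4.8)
The plan is to follow the by-now standard strategy for short time existence of curvature flows of networks, developed by Bronsard and Reitich~\cite{bronsard_reitich1993} and by Mantegazza, Novaga and Tortorelli~\cite{mantegazza_novaga_tortorelli2004} (see also~\cite{mantegazza_novaga_pluda_schulze2015}), adapting it to the presence of the loop. The key point is that, while the motion by curvature is a degenerate system from the purely geometric viewpoint, the parametrization fixed in Problem~\eqref{problema}, namely $\gamma^i_t=\gamma^i_{xx}/|\gamma^i_x|^2$, turns the evolution into a genuine, non-degenerate, quasilinear parabolic system of four scalar equations: the coefficient $|\gamma^i_x|^{-2}$ of the second space derivative is strictly positive by the regularity condition $\gamma^i_x\neq0$, which holds at time zero and is preserved for a short time by continuity. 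Thus no DeTurck-type modification is needed.

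First I would linearize the system around the initial network $\Gamma_0$. The principal part becomes, on each curve, the heat operator $\partial_t-|\sigma^i_x|^{-2}\partial_{xx}$, while the boundary operators split into three groups: the Dirichlet condition $\gamma^2(1,t)=P$ at the end point; the concurrency conditions $\gamma^1(0,t)=\gamma^1(1,t)=\gamma^2(0,t)$ at the $3$-point, which are linear and of order zero; and the angle condition $\tau^1(0)-\tau^1(1)+\tau^2(0)=0$, which is of first order in the space derivatives and is the only nonlinear one. One then checks that the resulting linear problem is well posed in the parabolic Hölder spaces $C^{2+2\alpha,1+\alpha}$: this amounts to verifying the Lopatinskii--Shapiro complementing condition at the end point and at the triple junction, and using the compatibility conditions of order $2$ of Remark~\ref{compc} to guarantee that the initial datum is attained in the correct space. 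Solonnikov's theory of linear parabolic systems then yields existence, uniqueness and the Schauder estimates for the linearized problem.

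The nonlinear problem is then solved by a fixed point argument on a small time interval: writing $\gamma^i=\sigma^i+\eta^i$ and freezing the coefficient $|\gamma^i_x|^{-2}$ and the lower order terms (in particular the nonlinearity entering the angle condition), one obtains a map on a ball of $C^{2+2\alpha,1+\alpha}([0,1]\times[0,\widetilde T])$ whose fixed point solves Problem~\eqref{problema}; the Schauder estimates for the linearized operator, together with the smallness of $\widetilde T$, make this map a contraction, and the condition $\gamma^i_x\neq0$ is retained for $\widetilde T$ small by the embedding into $C^1$. Uniqueness follows from the contraction, or alternatively from a Gronwall estimate on the difference of two solutions after their parametrizations have been suitably matched.

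The step I expect to be the main obstacle is the verification of the Lopatinskii--Shapiro condition at the $3$-point: there the four scalar equations are coupled through the six scalar boundary conditions (two vectorial concurrency relations of order zero and one vectorial angle relation of order one), and one must show that the only bounded exponential solution of the associated homogeneous half-line model problem is the trivial one. This is precisely where the $120$ degree angle enters, making the relevant algebraic system non-degenerate, exactly as in the analysis of the triod. The only genuinely new feature with respect to that case is that the two ends $x=0$ and $x=1$ of the curve $\gamma^1$ are glued to the same geometric point; since the parabolic analysis is local at each end of the parametrizing interval, this identification does not affect the argument, and the classical proof goes through unchanged.
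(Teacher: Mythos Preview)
Your proposal is correct and follows exactly the approach the paper takes: the paper simply refers to Bronsard and Reitich~\cite{bronsard_reitich1993}, noting that one linearizes about the initial datum and applies a fixed point argument to obtain a $C^{2+2\alpha,1+\alpha}$ solution, and that this adapts easily to the spoon setting. Your write-up is more detailed than the paper's (which gives no details beyond the citation), but the strategy---linearization, Lopatinskii--Shapiro / Solonnikov theory, contraction on a short time interval, and the observation that identifying the two ends of $\gamma^1$ at the same $3$-point is a purely local matter---is the same.
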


\begin{proof}
The proof is based on the results of~\cite{bronsard_reitich1993}.
In that paper they consider three curves lying in a convex and smooth 
domain $\Omega\subset\mathbb{R}^2$, intersecting at a point with prescribed angle
and meeting the boundary of $\Omega$ orthogonally.
Linearizing the problem about the initial data
and using a fixed point argument, Bronsard and Reitich obtain a
$C^{2+2\alpha,1+\alpha}\left(\left[0,1 \right]\times \left[0,T\right) \right)$ solution.
The proof can be easily adapted to our situation.
\end{proof}

\begin{thm}
For any initial smooth, regular network $\Gamma_0$ in a smooth, convex, open set  
$\Omega\subset\mathbb{R}^2$ there exists a unique smooth solution of 
Problem~\eqref{problema} in a maximal time interval $\left[ 0,T\right) $.
\end{thm}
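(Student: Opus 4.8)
The plan is to bootstrap from the short-time existence result to a maximal solution by a standard continuation argument.

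First I would invoke the previous theorem: given the smooth initial network $\Gamma_0$, one can produce a $C^{2+2\alpha,1+\alpha}$ solution on some interval $[0,T_1)$; since $\Gamma_0$ is smooth and all the compatibility conditions of every order are satisfied (they follow, with some care at the $3$-point, by differentiating the boundary relations and substituting the evolution equation — in the smooth case this is automatic up to a suitable choice of the tangential parametrization), parabolic Schauder estimates upgrade the solution to $C^\infty$ in space and time on any $[0,T_1-\varepsilon]$. Then I would define
\[
T=\sup\Bigl\{\,t>0 \;:\; \text{there is a smooth solution of Problem~\eqref{problema} on } [0,t)\,\Bigr\}\,.
\]
Uniqueness on each subinterval — which again reduces, as in the previous proof, to the Bronsard--Reitich fixed-point argument applied after linearizing about a given solution, where one notes that two solutions differ by a solution of a linear parabolic system with zero initial data and hence coincide — guarantees that these local solutions patch together into a single smooth solution on $[0,T)$.

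Next I would argue $T>0$ and that, if $T<\infty$, the solution genuinely cannot be continued, i.e.\ $[0,T)$ is maximal. Positivity of $T$ is immediate from short-time existence. For maximality: suppose a smooth solution existed on $[0,T']$ with $T'\ge T$; then restarting the flow at time $T/2$ with the smooth, regular, compatible network $\Gamma_{T/2}$ as new initial datum, short-time existence and uniqueness would extend the solution past $T$, contradicting the definition of $T$ unless $T'=T$ and the solution does not extend — so $[0,T)$ is the maximal interval. Here one must check that $\Gamma_{T/2}$ is still a \emph{spoon-shaped} network in the sense of the definition: regularity $\gamma_x^i\neq 0$, embeddedness, the intersection only at the $3$-point, and $O\notin\partial\Omega$. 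Regularity and the topological/embeddedness conditions persist on the open interval because they are open conditions preserved by the smooth flow (the strong maximum principle keeps the network away from $\partial\Omega$ except at $P$, as noted in the Remark after the definition), so they hold at every time strictly before $T$, in particular at $T/2$.

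The main obstacle is the patching/uniqueness at the junction: the tangential velocity $\lambda^i$ is not prescribed by the geometric equation $\gamma_t=\underline k$ alone — only the normal component is geometric — so to even speak of ``the'' solution one must fix a gauge (a choice of parametrization, e.g.\ the special one used in~\cite{bronsard_reitich1993} that makes the system genuinely parabolic with the boundary conditions~\eqref{cond1}--\eqref{cond4} consistent). Once the gauge is fixed the system is a nondegenerate quasilinear parabolic system with the stated boundary conditions, and the Bronsard--Reitich machinery gives both existence and uniqueness locally; the only real work is verifying that the compatibility conditions of all orders propagate under the flow so that the restarted problem at time $T/2$ again falls within the scope of that theorem, which is where the relations~\eqref{cond1}--\eqref{cond4} and their time-derivatives are used.
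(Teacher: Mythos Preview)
Your sketch is essentially correct and is the standard continuation argument, but note that the paper does not give its own proof here at all: its entire proof is the one-line reference ``See~\cite{mantegazza_novaga_tortorelli2004}, Theorem~3.1.'' What you have written is, in outline, the content of that cited theorem --- short-time existence from the Bronsard--Reitich result, parabolic bootstrapping to $C^\infty$, definition of $T$ as a supremum, and uniqueness/patching via the linearized problem --- so there is no genuine methodological difference, only a difference in level of detail. One caveat: your remark that for smooth initial data the compatibility conditions ``are automatic up to a suitable choice of the tangential parametrization'' is only partly true; the tangential components (the $\lambda^i$) can indeed be fixed by a gauge choice, but the normal components impose genuine geometric constraints (e.g.\ $k^2=0$ at $P$ and the relation $k^1(0)-k^1(1)+k^2(0)=0$ at the $3$-point), and in \cite{mantegazza_novaga_tortorelli2004} these are part of the hypothesis ``smooth initial network'' rather than consequences of it.
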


\begin{proof}
See~\cite{mantegazza_novaga_tortorelli2004}, Theorem~3.1.
\end{proof}

\subsection{Integral estimates}
Following the line of proof of~\cite{mantegazza_novaga_tortorelli2004}, we use 
integral estimates to describe the behavior of the evolution of the network.
Thanks to the conditions~\eqref{cond1}, \eqref{cond2},
\eqref{cond3} and \eqref{cond4} at the $3-$point 
and to the Dirichlet condition at the end point $P$,
the computation in~\cite{mantegazza_novaga_tortorelli2004}, Section 3, pp.257-263,
can be repeated, to obtain:
$$
\int_{\Gamma_t} |\partial_s^j k|^2\,ds
\leq C\int_{0}^t\left(\int_{\Gamma_\xi}
 k^2\,ds\right)^{2j+3}\,d\xi + C\left(\int_{\Gamma_t}
  k^2\,ds\right)^{2j+1} + Ct + C\,,
$$
where with $\partial^j_s k^2$ we denote the $j$-th derivative of $k^2$ 
along a curve with respect to the relative arclength parameter and $C$ is a 
constant depending only on $j\in\mathbb N$ and the initial network $\Gamma_0$.
Passing from integral to $L^\infty$ estimates by using Gagliardo-Nirenberg 
interpolation inequalities (see~\cite{nirenberg59}, equation $(2.2)$, page $125$), we have the following:

\begin{prop}\label{pluto1000} If the lengths of the curves are
  positively bounded from below and the $L^2$ norm of
  $k$ is bounded, uniformly on $[0,T)$, then the curvature of $\Gamma_t$ 
  and all its space derivatives are uniformly bounded in the same
  time interval by some constants depending only on the $L^2$ integrals of
  the space derivatives of $k$ on the initial network $\Gamma_0$.
\end{prop}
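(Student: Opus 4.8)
The plan is to feed the integral estimate displayed immediately above the statement into the one-dimensional Gagliardo--Nirenberg interpolation inequalities, curve by curve, exploiting both the lower bound on the lengths $L_i$ assumed in the hypothesis and the upper bound $L\le L(\Gamma_0)$ that is automatic from $\frac{dL}{dt}=-\int_{\Gamma_t}k^2\,ds\le 0$.

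\textbf{Step 1: uniform $L^2$ bounds on all $s$-derivatives of $k$.} First I would observe that under the two standing hypotheses the right-hand side of
\[
\int_{\Gamma_t}|\partial_s^jk|^2\,ds\le C\int_0^t\Bigl(\int_{\Gamma_\xi}k^2\,ds\Bigr)^{2j+3}d\xi+C\Bigl(\int_{\Gamma_t}k^2\,ds\Bigr)^{2j+1}+Ct+C
\]
is bounded uniformly in $t\in[0,T)$: indeed $\int_{\Gamma_t}k^2\,ds\le M$ for all $t$ by assumption, $t\le T<\infty$, and $C$ depends only on $j$ and $\Gamma_0$. Hence for every $j\in\mathbb N$ there is a constant $C_j=C_j(\Gamma_0,M,T)$ with $\|\partial_s^jk\|_{L^2(\Gamma_t)}\le C_j$ for all $t\in[0,T)$.

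\textbf{Step 2: from $L^2$ to $L^\infty$.} Next I would upgrade these to pointwise bounds. On each curve $\gamma^i(\cdot,t)$, of length $L_i(t)$, the scaling-explicit Gagliardo--Nirenberg inequality on a bounded interval reads, for a smooth function $f$ and any $m\ge j+1$,
\[
\|\partial_s^jf\|_{L^\infty(\gamma^i)}\le C_{\mathrm{GN}}\,\|\partial_s^mf\|_{L^2(\gamma^i)}^{\theta}\,\|f\|_{L^2(\gamma^i)}^{1-\theta}+\frac{C_{\mathrm{GN}}}{L_i^{\,1/2}}\,\|f\|_{L^2(\gamma^i)},\qquad \theta=\tfrac{2j+1}{2m},
\]
with $C_{\mathrm{GN}}$ independent of $L_i$ (this is the form in \cite{nirenberg59}, equation $(2.2)$). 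Since $\ell\le L_i(t)\le L(\Gamma_0)$ for all $t$ — the lower bound by hypothesis, the upper bound because $L_i\le L$ and $L$ is non-increasing — and since $\|k\|_{L^2(\gamma^i)},\dots,\|\partial_s^mk\|_{L^2(\gamma^i)}$ are controlled by the constants of Step~1, applying this with $f=k$ on $\gamma^1$ and $\gamma^2$ gives $\|k\|_{L^\infty(\Gamma_t)}\le\widetilde C$ uniformly on $[0,T)$, and applying it with $f=\partial_s^jk$ gives the analogous bound for every $j$. Summing over the two curves and over the finitely many derivative orders needed yields the statement, with constants depending only on the $L^2$ integrals of the space derivatives of $k$ on $\Gamma_0$ (via Step~1), $M$, $T$ and $\ell$.

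\textbf{Main obstacle.} The delicate point is the use of interpolation on curves \emph{with boundary}: the $\gamma^i$ are not closed (they carry the $3$-point and the end point $P$), so one cannot invoke the periodic Gagliardo--Nirenberg inequality, and the constant in the naive scale-invariant inequality degenerates as $L_i\to 0$. This is precisely why the hypothesis $L_i\ge\ell>0$ is required, and why one must retain the correction term $L_i^{-1/2}\|f\|_{L^2}$ above; with the two-sided length bound in force the argument is the same as the passage from integral to $L^\infty$ estimates carried out in \cite{mantegazza_novaga_tortorelli2004}, so in the write-up I would cite those computations and merely check that the length bounds hold here.
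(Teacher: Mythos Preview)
Your proposal is correct and follows exactly the route the paper takes: the paper does not give a separate proof of this proposition but simply states that, once the displayed integral inequality for $\int_{\Gamma_t}|\partial_s^j k|^2\,ds$ is in hand, one passes from integral to $L^\infty$ estimates via the Gagliardo--Nirenberg interpolation inequalities of \cite{nirenberg59}, exactly as in \cite{mantegazza_novaga_tortorelli2004}. Your Steps~1 and~2 spell out precisely this argument, and your remark on the length lower bound being needed to control the constants in the interpolation on curves with boundary is on point.
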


\begin{rem}\label{0001}
\rm{If the length $L_1$ of the closed curve $\gamma^1$ 
is not bounded from below, 
then the $L^2$ norm of its curvature $k$ is not bounded.
In fact,
from previous estimates and using H\"older inequality, we get
$\frac{5\pi}{3}=\vert\int_{\gamma^1} k ds\vert\leq \left( L_1\right)^{\frac12}\left(\int_{\gamma^1} k^2 ds\right)^{\frac12}$, hence
\begin{equation}\label{curvlen}
\int_{\gamma^1} k^2ds\geq \frac{25\pi^2}{9L_1}\,.
\end{equation}
Moreover the maximal time $T$ of existence of a smooth flow is finite and
\begin{equation}\label{stimaT}
T\leq \frac{3A_0}{5\pi}\,,
\end{equation}
because, in order to avoid the presence of a singularity, the area $A(t)$
has to stay positive during the smooth evolution, otherwise the $L^2$ norm of the 
curvature is not bounded.
From \eqref{area} we easily obtain \eqref{stimaT}.
}
\end{rem}

\noindent
The next theorem follows from Propositions~\ref{pluto1000} and~\ref{0001} 
reasoning as in~\cite[Theorem~3.18]{mantegazza_novaga_tortorelli2004}.

\begin{thm}\label{infinitecurvature}
If $[0,T)$, with $T<+\infty$, is the maximal time interval of existence of a smooth solution $\Gamma_t$ of Problem~\eqref{problema}, 
then at least one of the following possibilities holds:
\begin{align*}
\bullet &\; \liminf_{t\rightarrow T}\, L_2(t) = 0\,;\\
\bullet &\; \limsup_{t\rightarrow T} \int_{\Gamma_t}k^2 ds = +\infty\,.
\end{align*}
\end{thm}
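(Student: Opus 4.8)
The plan is to argue by contradiction, following the scheme of~\cite[Theorem~3.18]{mantegazza_novaga_tortorelli2004}. Suppose neither alternative holds: then $\liminf_{t\to T}L_2(t)>0$, so $L_2$ is bounded below by a positive constant on $[0,T)$, and $\sup_{[0,T)}\int_{\Gamma_t}k^2\,ds=:M<+\infty$. First I would recover a lower bound on the length $L_1$ of the loop: by Remark~\ref{0001}, estimate~\eqref{curvlen} gives $\int_{\gamma^1}k^2\,ds\ge\frac{25\pi^2}{9L_1}$, so the bound $\int_{\gamma^1}k^2\,ds\le M$ forces $L_1(t)\ge\frac{25\pi^2}{9M}>0$. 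Since $\frac{dL}{dt}=-\int_{\Gamma_t}k^2\,ds\le 0$, the total length is nonincreasing, hence $L_1,L_2\le L(0)$ as well. Thus both curves have length uniformly pinched between two positive constants on $[0,T)$.

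With this in hand, Proposition~\ref{pluto1000} applies and yields uniform bounds on $[0,T)$ for the curvature $k$ and for all its space derivatives $\partial_s^j k$, with constants depending only on $j$ and $\Gamma_0$. The remaining task is to upgrade these geometric bounds into a genuine smooth extension of the flow past $T$. I would reparametrize each curve with constant speed, $|\gamma^i_x(x,t)|\equiv L_i(t)$ (this alters only the tangential velocity, not the moving network), so that the length bounds together with the curvature bounds translate into uniform $C^\infty$ bounds on the maps $\gamma^i(\cdot,t)$ in the space variable; differentiating the motion equation and using the boundary relations~\eqref{cond1}--\eqref{cond4} at the $3$-point and the Dirichlet condition at $P$, one obtains uniform bounds on all mixed space--time derivatives as well. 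Consequently $\gamma^i(\cdot,t)$ converges in $C^\infty([0,1])$ as $t\to T$ to limit maps defining a network $\Gamma_T$.

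Then I would check that $\Gamma_T$ is again an admissible initial datum: regularity survives because $|\gamma^i_x(\cdot,T)|=L_i(T)>0$; the embeddedness, the $120^\circ$ angle condition, the fixed end point at $P$, and the fact that the $3$-point stays in the interior of $\Omega$ (strong maximum principle, using convexity of $\Omega$) all pass to the limit; and by construction $\Gamma_T$ satisfies the compatibility conditions of every order. Short-time existence applied with $\Gamma_T$ as initial network then produces a smooth solution on $[T,T+\delta)$, which, glued to $\Gamma_t$, extends the flow smoothly to $[0,T+\delta)$, contradicting the maximality of $T$.

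The main obstacle is the last two steps: passing from $L^\infty$ control of $k$ and its derivatives to genuine $C^\infty$ convergence of the parametrized curves and, crucially, verifying that the limit network does not degenerate — that no curve collapses, that the triple-junction structure and the $120^\circ$ condition are retained, and that the compatibility conditions needed to restart the flow are inherited in the limit. This is precisely where the boundary identities~\eqref{cond1}--\eqref{cond4} and the convexity of $\Omega$ enter, and it is the part that requires care, rather than the elementary length estimates at the beginning.
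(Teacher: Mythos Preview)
Your argument is correct and follows exactly the route indicated in the paper, which simply records that the result follows from Proposition~\ref{pluto1000} and Remark~\ref{0001} ``reasoning as in~\cite[Theorem~3.18]{mantegazza_novaga_tortorelli2004}''; you have in fact spelled out that reasoning in more detail than the paper does, including the use of~\eqref{curvlen} to recover the lower bound on $L_1$ and the extension--restart scheme. The ``obstacle'' you flag (passing to a smooth limit network at $t=T$ and checking it is an admissible initial datum) is precisely the content of the cited theorem in~\cite{mantegazza_novaga_tortorelli2004}, so there is nothing missing.
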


\noindent
In the whole paper we will require that the length of the non-closed curve $\gamma^2$ 
is positively bounded from below during the flow, 
to avoid that the non-closed curve shrinks to a point faster than the closed one.

\section{An isoperimetric estimate}
Given the smooth  flow $\Gamma_t=F\left(\Gamma,t\right)$ 
we take two points $p=F\left(x,t\right)$ and $q=F\left(y,t\right)$ 
belonging to $\Gamma_t$ and we call $\left( \xi_{p,q}\right)_i$ 
the geodesic curves contained in 
$\Gamma_t$ connecting $p$ and $q$. \\
Then, if $p$ and $q$ are both points of the closed curve $\gamma^1$,
we let $A$ be the area of the region $\mathcal{A}$ in $\mathbb{R}^2$
enclosed in the loop
and  $A_{i,p,q}$ the area of the open region $\mathcal{A}_{i,p,q}$ enclosed by the segment
$[p,q]$ and one of the geodesic curve $\left(\xi_{p,q}\right)_i$. (See Fig.~\ref{disuelle}).\\
When the ${\mathcal A}_{i,p,q}$ is not connected, we let
$A_{i,p,q}$ to be the sum of the areas of its connected components.
We define $\psi(A_{p,q})$ as 
$$
\psi(A_{p,q})=\frac{A}{\pi}\sin\left( \frac{\pi}{A}A_i\right)\,,
$$
where $A_i$ is the smallest of possible $A_{i,p,q}$,
depending on the choice of $\left(\xi_{p,q}\right)_i$.\\ 
 If $p$ and $q$ are not both points of $\gamma^1$,
 we let again $A_{i,p,q}$ be the area of the open region $\mathcal{A}_{i,p,q}$ enclosed by the segment
$[p,q]$ and one of the geodesic curves $\left(\xi_{p,q}\right)_i$.
Like before, when the region ${\mathcal A}_{i,p,q}$ is not connected, we let
$A_{i,p,q}$ be the sum of the areas of its connected components.
Differently from the previous case, the possible ${\mathcal A}_{i,p,q}$
are not disjoint.
We call $A_{p,q}$  the area of the smallest one
(See Fig.~\ref{disuelle}).

We consider the function $\Phi_t:\Gamma\times\Gamma\rightarrow\mathbb{R}\cup{\{+\infty\}}$
defined as
\begin{equation*}
\Phi_t(x,y)= 
\begin{cases}
\frac{\vert p-q\vert^2}{\psi(A_{p,q})}\qquad &\,\text{ {if} $x\not=y$},\,x,y\; 
\text{are points of $\gamma^1$},\\
\frac{\vert p-q\vert^2}{A_{p,q}}\qquad &\text{ { if} $x\not=y$},\,x,y\;\text{are 
not both points of $\gamma^1$},\\
4\sqrt{3}\;\; &\text{ {  if} $x$ and $y$ coincide with the $3-$point $O$ of $\Gamma$}\,,\\
+\infty\;\; &\text{ {  if} $x=y\not=O$},\\
\end{cases}
\end{equation*} 
where $p=F(x,t)$ and $q=F(y,t)$.\\
Since $\Gamma_t$ is smooth and the $120$ degrees condition holds, it is
easy to check that $\Phi_t$ is a lower semicontinuous function. Hence,
by the compactness of $\Gamma$, the following infimum is actually a
minimum
\begin{equation*}
E(t) = \inf_{x,y\in\Gamma}\Phi_t(x,y)\,,
\end{equation*}
for every $t\in[0,T)$.
We call $E(t)$ ``embeddedness measure''.
Similar geometric quantities have already been applied to similar problems 
in~\cite{hamilton3},~\cite{chzh} and~\cite{huisk2}.\\
The function $\Phi_t$ is locally Lipschitz because the numerator is the square of a distance function
and until the flow exists the denominator never goes to zero.

\begin{rem}\rm{Following the proof of Theorem~2.1 in~\cite{chzh}, 
we can always find a minimizing pair $\left(p,q\right)$ such that $A_{i}$
is composed only by one connected component.
}\end{rem}

\begin{rem}\rm{
Following the line of proof by Huisken in~\cite{huisk2},
in the definition of the function $\Phi_t$ we distinguish two cases. 
We introduce the function $\psi(A_{p,q})$ when
the two points belong to the closed curve $\gamma^1$, because we want that $\Phi_t$ is a smooth
function also when $A_i$ is equal to $\frac{A}{2}$.
As we can exclude the case that $A_i$ has more that one connected component, as we said in the previous remark,
it can happen that $A_1=A_2$ only when both $p$ and $q$ belong to the closed curve $\gamma^1$.}
\end{rem}

In the following computation we only consider the case in which
$p$ and $q$ are both points of the closed curve $\gamma^1$; for the other situation,
see Section $4$ of~\cite{mantegazza_novaga_tortorelli2004}.\\

\begin{center}
\begin{tikzpicture}[scale=1]
\draw[color=black,scale=1,domain=-4.141: 4.141,
smooth,variable=\t,shift={(-1,0)},rotate=0]plot({3.25*sin(\t r)},
{2.5*cos(\t r)}) ;
\draw (-4.25,0) node[left] {$P$} to[out=30,in=110, looseness=1] (-3.03,0) 
to[out= -50,in=180, looseness=1] (-1.81,0)
to[out=60, in=35, looseness=1] (-1, 0.81)
to[out=35,in=150,  looseness=1] (0.62,0.81)
to[out=-30,in=90, looseness=1.5] (1.44,0)
to[out=-90,in=20, looseness=1] (0.62,-0.81) 
to[out=-160,in=-60,  looseness=0.5] (-1.81,0);
\draw (-3.03,0) coordinate (a_1) -- (-1,0.81) coordinate (a_2);
\filldraw[fill=black!10!white]
(-3.03,0) 
to[out= -50,in=180, looseness=1] (-1.81,0) -- (-1.81,0)
to[out=60, in=35, looseness=1] (-1, 0.81) --
(-3.03,0) coordinate (a_1);
\path[font= \scriptsize] 
(-2.3,0.3) node[below] {$\mathcal{A}_{p,q}$};
\path[font= \Large]
(-3.75,-1.8) node[below] {$\Omega$};
\path[font=\large]
    (-3.8,0.5) node[right] {$\gamma^2$}
    (0.5,-1) node[right] {$\gamma^1$}
    (-1.9,0) node[below] {$O$}
    (-3.03,0) node[below] {$p$}
    (-1,0.81) node[below] {$q$};
\draw[color=black,scale=1,domain=-4.141: 4.141,
smooth,variable=\t,shift={(7,0)},rotate=0]plot({3.25*sin(\t r)},
{2.5*cos(\t r)}) ;
\draw (3.75,0) node[left] {$P$} to[out=30,in=110, looseness=1] (4.97,0) 
to[out= -50,in=180, looseness=1] (6.19,0)
to[out=60, in=35, looseness=1] (7, 0.81)
to[out=35,in=150,  looseness=1] (8.62,0.81)
to[out=-30,in=90, looseness=1.5] (9.44,0)
to[out=-90,in=20, looseness=1] (8.62,-0.81) 
to[out=-160,in=-60,  looseness=0.5] (6.19,0);
\draw (7,0.81) coordinate (a_1) -- (9.44,0) coordinate (a_2);
\filldraw[fill=black!10!white]
(7,0.81) 
to[out=60,in=150,  looseness=1] (8.62,0.81)-- (8.62,0.81)
to[out=-30,in=90, looseness=1.5] (9.44,0) -- (7,0.81) coordinate (a_1);
\path[font= \scriptsize] 
(8,1.1) node[below] {$\mathcal{A}_{1,p,q}$}
(7.7,0.3) node[below]{$\mathcal{A}_{2,p,q}$};
\path[font= \Large]
(4.25,-1.8) node[below] {$\Omega$};
\path[font=\large]
    (4.2,0.5) node[right] {$\gamma^2$}
    (8.5,-1) node[right] {$\gamma^1$}
    (6.1,0) node[below] {$O$}
    (7,0.81) node[below] {$p$}
    (9.44,0) node[right] {$q$};                 
\end{tikzpicture}
  \end{center}
\begin{fig}\label{disuelle}
In the figure are shown two possible situations: in the first case the point $p\in\gamma^2$ and 
$q\in\gamma^1$, in the second case $p$ and $q$ are both point of $\gamma^1$.
\end{fig}

If the network $\Gamma_t$ has no self-intersections we have
$E(t)>0$; the converse is clearly also true.\\
Moreover, $E(t)\leq\Phi_t(0,0)=4\sqrt{3}$ always holds, thus when
$E(t)>0$ the two points $(p,q)$ of a minimizing pair $(x,y)$ can coincide if
and only if $p=q=O$.\\
Finally, since the evolution is smooth, it is easy to see that the
function $E:[0,T)\to\mathbb R$ is continuous.\\

\noindent
Since we are dealing with embedded networks, with a little abuse of
notation, we consider the function $\Phi_t$ defined on
$\Gamma_t\times\Gamma_t$ and we speak of a minimizing pair for the couple
of points $(p,q)\in\Gamma_t\times\Gamma_t$ instead of
$(x,y)\in\Gamma\times\Gamma$.

\begin{prop}\label{lemet2} The function $E(t)$ is monotone increasing 
in every time interval where $0<E(t)<\frac{1}{2}$.
\end{prop}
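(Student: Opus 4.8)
The strategy is the one pioneered by Huisken and adapted in \cite{mantegazza_novaga_tortorelli2004,magni_mantegazza_novaga2013}: show that if at some time $t_0$ the value $E(t_0)$ is attained at an interior minimizing pair $(p,q)$ (so $p\neq q$, both points not at the end point, and away from the $3$-point), then at that pair one has $\frac{d}{dt}E \ge 0$ in the sense of forward difference quotients / lower Dini derivative. Since $E$ is continuous and locally Lipschitz (as noted after the definition of $\Phi_t$), establishing this pointwise differential inequality at every minimizing pair gives monotonicity on the interval $\{0<E<\tfrac12\}$ by a standard ODE-comparison argument (Hamilton's trick for the minimum of a time-dependent family). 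First I would reduce to the subcase where $p,q$ both lie on the closed curve $\gamma^1$, as the excerpt explicitly does; the mixed case is referred to Section~4 of \cite{mantegazza_novaga_tortorelli2004}.

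Next I would set up the first-order conditions. At an interior minimizing pair of $\Phi_t(x,y)=|p-q|^2/\psi(A_{p,q})$, differentiating in $x$ and in $y$ gives that the chord $p-q$ makes equal angles with the two tangent vectors $\tau(p)$ and $\tau(q)$; writing $w=(p-q)/|p-q|$ and letting $\theta$ be the angle between $w$ and $\tau$ at each endpoint, one gets the familiar relation $\langle w,\tau(p)\rangle = -\langle w,\tau(q)\rangle=:\cos\theta$ together with the condition that couples $\ell=|p-q|$, the chosen area $A_i$, and the arclength of the geodesic arc $(\xi_{p,q})_i$. The use of $\psi(A)=\frac{A}{\pi}\sin\!\big(\frac{\pi}{A}A_i\big)$ rather than $A_i$ itself is precisely to make $\Phi_t$ smooth across the symmetric configuration $A_1=A_2=\tfrac{A}{2}$, so the first-variation computation is valid there too. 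The second-order condition $D^2\Phi_t\ge 0$ at the minimum, contracted against the tangent directions, supplies the inequality that will be fed into the time derivative.

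Then I would compute $\frac{d}{dt}\Phi_t$ at the minimizing pair, using $\gamma_t=\frac{\gamma_{xx}}{|\gamma_x|^2}=\lambda\tau+k\nu$ and the area evolution \eqref{area}: the numerator contributes through $\frac{d}{dt}|p-q|^2 = 2\langle p-q,\, \underline v(p)-\underline v(q)\rangle$, and the denominator through $\frac{d}{dt}\psi(A_{p,q})$, which involves both $\frac{dA}{dt}=-\frac{5\pi}{3}$ and the motion of the endpoints of the chosen geodesic arc. Combining with the first- and second-order conditions, and using that at a minimum one can freely add nonnegative multiples of the second-variation inequality, the bulk terms should collapse to an expression with a favorable sign; the threshold $E<\tfrac12$ (respectively the bound $E\le 4\sqrt3$) is exactly what is needed to control the remaining sign-indefinite terms, since it forces $\theta$ into a range where the trigonometric factors cooperate. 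I would mirror the bookkeeping of \cite[Section~4]{mantegazza_novaga_tortorelli2004} and \cite{magni_mantegazza_novaga2013} line by line, the only genuinely new feature being the $\psi$-correction and the contribution of $\frac{dA}{dt}=-\frac{5\pi}{3}$ from the loop.

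The main obstacle is twofold. First, one must rule out that the minimizing pair escapes to a degenerate configuration — either $(p,q)\to(O,O)$, or $p$ (or $q$) hits the fixed end point $P$, or the two branches of the relevant region merge — during the interval considered; the bound $E(t)\le \Phi_t(O,O)=4\sqrt3$ and the $120^\circ$ condition handle the $3$-point case (a minimizing pair can only reach $O$ when $E$ has not dropped), and strict convexity of $\Omega$ together with the behavior at $P$ handles the boundary case, but these need to be stated carefully. Second, and more delicate, is verifying that the cross term coming from $\frac{d}{dt}\psi(A_{p,q})$ — in particular the $-\frac{5\pi}{3}$ contribution — does not spoil the sign; this is where the precise form of $\psi$ and the restriction $0<E<\tfrac12$ must be used in concert, and it is the step I expect to require the most care.
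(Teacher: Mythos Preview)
Your plan is correct and follows essentially the same route as the paper: Hamilton's trick applied to the locally Lipschitz function $E$, exclusion of the degenerate case $p=q=O$ via the $120^\circ$ condition, first variation at the minimizing pair to pin down the equal angles $\alpha(p)=\alpha(q)$ (the paper obtains $\cot\alpha=-\tfrac{E(t)}{4}\cos(\tfrac{\pi}{A}A_i)$, whence $\tfrac{\pi}{2}<\alpha<\tfrac{2\pi}{3}$ from $E<\tfrac12$), a second variation moving $p$ and $q$ simultaneously to produce the key inequality, and then the time derivative of $\psi(A_{p,q})$ with the $-\tfrac{5\pi}{3}$ contribution. The only point your outline does not anticipate is that the final positivity check in the paper is done by a case split on $A_i/A\in[0,\tfrac13]$ versus $[\tfrac13,\tfrac12]$, which is how the ``delicate'' step you flag is actually resolved.
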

\begin{proof} 
We assume that $0<E(t)<\frac{1}{2}$. 
Since $E(t)$ is a locally Lipschitz function,
to prove the statement it is then enough to show that
$\frac{dE(t)}{dt}>0$ for every time $t$ at which this derivative
exists (which happens almost everywhere in every time interval where $0<E(t)<\frac{1}{2}$).\\
We can exclude the case in which $p$ or $q$ coincides with $O$,
the proof goes like the one of Lemma~4.2, in~\cite{mantegazza_novaga_tortorelli2004}.\\
Fixed a minimizing pair $(p,q)$ at time $t$,  
we choose a value  $\varepsilon>0$ smaller than the intrinsic distances of $p$ and $q$ from
the 3--point $O$ of $\Gamma_t$ and between them.\\
Possibly taking a smaller $\varepsilon>0$, we fix an arclength
coordinate $s\in (-\varepsilon,\varepsilon)$ and a local parametrization $p(s)$ of
the curve containing in a neighborhood of $p=p(0)$, with the 
same orientation of the original one.
Let $\eta(s)=\vert p(s)-q\vert$ and $A_i(s)=A_{i,p(s),q}$, since  
\begin{equation*}
E(t)=\min_{s\in(-\varepsilon,\varepsilon)}\frac{\eta^2(s)}{\psi(A_{p(s),q})}=
\frac{\eta^2(0)}{\psi(A_{p(s),q}(0))}\,,
\end{equation*}
if we differentiate in $s$ we obtain 
\begin{equation}\label{eqdsul}
\frac{d\eta^2(0)}{ds}\psi(A_{p(s),q}(0))=
\frac{d\psi(A_{p(s),q}(0))}{ds}\eta^2(0)\,.
\end{equation}
As the intersection of the segment $[p,q]$ with the network is transversal, 
we have an angle $\alpha(p)\in(0,\pi)$ 
determined by the unit tangent $\tau(p)$ and the vector $q-p$.\\
We compute
\begin{align*} 
\frac{{ d} \eta^2(0)}{{ d} s} &=\,
-2 \langle \tau(p)\,\vert\, q-p\rangle = -2 \vert
p-q\vert \cos\alpha(p)\\ 
\frac{{ d} A(0)}{{ d} s} &=\, 0\\
\frac{{ d} A_i(0)}{{ d} s} &=\, 
\frac 12 \vert \tau(p) \wedge (q-p)\vert = 
\frac 12 \langle \nu(p)\,\vert\, q-p\rangle = 
\frac 12  \vert p-q\vert \sin\alpha(p)\\
\frac{{ d} \psi(A_{p(s),q}(0))}{{ d} s} &=\, 
\frac{{ d} A_i(0)}{{ d} s}\cos\left(\frac{\pi}{A}A_i(0)\right)\\
\, &=\,
\frac 12  \vert p-q\vert \sin\alpha(p)\cos\left(\frac{\pi}{A}A_i(0)\right)\,. \\
\end{align*}
Putting these derivatives in equation~\eqref{eqdsul} and
recalling that $\eta^2(0)/\psi(A_{p(s),q}(0))=E(t)$,  we get
\begin{align}\label{topolino2}
\cot\alpha(p) 
&\,= -\frac{\vert p-q\vert^2}{4\psi(A_{p(s),q}(0))}\cos\left(\frac{\pi}{A}A_i(0)\right)\nonumber\\
&\,= -\frac{E(t)}{4}\cos\left(\frac{\pi}{A}A_i(0)\right)\,.
\end{align}
Since $0<E(t)<\frac{1}{2}<\frac{4}{\sqrt{3}}$ we get $-\frac{1}{\sqrt{3}}<\cot\alpha(p)<0$ which
implies
\begin{equation*}
\frac{\pi}{2} <\alpha(p) < \frac 23 \pi\,.
\end{equation*}
The same argument clearly holds for the point $q$, hence 
defining $\alpha(q)\in(0,\pi)$ to be the angle determined by the
unit tangent $\tau(q)$ and the vector $p-q$, by equation~\eqref{topolino2}
it follows that $\alpha(p)=\alpha(q)$ and we simply write $\alpha$
for both.\\
We consider now a different variation, moving at the same time the
points $p$ and $q$, in such a way that $\frac{dp(s)}{ds}=\tau(p(s))$ and 
$\frac{dq(s)}{ds}=\tau(q(s))$.\\
As above, letting $\eta(s)=\vert p(s)-q(s)\vert$ and 
$A(s)=A_{i\,(p(s),q(s))}$, by minimality we have
\begin{align}\label{eqdersec}
\frac{{ d} \eta^2(0)}{{ d} s}\psi(A_{p(s),q(s)}(0))&=
\frac{{ d} \psi(A_{p(s),q(s)}(0))}{{ d} s}\eta^2(0) \;\;\text{ { and} }\;\;\nonumber\\
\frac{{ d}^2 \eta^2(0)}{{ d} s^2}\psi(A_{p(s),q(s)}(0))&\ge 
\frac{{ d}^2\psi( A_{p(s),q(s)}(0))}{{ d} s^2}\eta^2(0)\,.
\end{align}
Computing as before,
\begin{align*}
\frac{{ d} \eta^2(0)}{{ d} s} 
&=\,2 \langle p-q \,\vert\,\tau(p)-\tau(q)\rangle = -4\vert p-q\vert\cos\alpha \\
\frac{{ d} A(0)}{{ d} s} 
&=\, 0\\
\frac{{ d} A_i(0)}{{ d} s} 
&=\, -\frac12\langle p-q\,\vert\,\nu(p)+\nu(q)\rangle
=+\vert p-q\vert\sin\alpha\\
\frac{{ d}^2 \eta^2(0)}{{ d} s^2} 
&=\,2 \langle \tau(p)-\tau(q) \,\vert\,\tau(p)-\tau(q)\rangle +
2 \langle p-q \,\vert\, k(p)\nu(p) -k(q)\nu(q)\rangle\\
&=\, 2\vert \tau(p)-\tau(q)\vert^2+
2 \langle p-q \,\vert\, k(p)\nu(p) -k(q)\nu(q)\rangle\\
&=\, 8\cos^2\alpha+
2 \langle p-q \,\vert\, k(p)\nu(p) -k(q)\nu(q)\rangle\\
\frac{{ d}^2 A_i(0)}{{ d} s^2} 
&=\,-\frac12\langle \tau(p)-\tau(q)\,\vert\,\nu(p)+\nu(q)\rangle
+\frac12\langle p-q\,\vert\,k(p)\tau(p)+k(q)\tau(q)\rangle\\
&=\,-\frac12\langle\tau(p)\,\vert\,\nu(q)\rangle
+\frac12\langle \tau(q)\,\vert\,\nu(p)\rangle\\
&+\frac12\langle p-q\,\vert\,k(p)\tau(p)+k(q)\tau(q)\rangle\\
&=\,-2\sin\alpha\cos\alpha
-1/2\vert p-q\vert(k(p)-k(q))\cos\alpha\\
\frac{{ d}^2 \psi(A_{p(s),q(s)}(0))}{{ d} s^2} 
&=\,\left.\frac{{ d}}{{ d} s}\left\lbrace
\frac{{ d} A_i(s)}{{ d} s}
\cos\left(\frac{\pi}{A}A_i(s)\right) 
\right\rbrace\right\vert_{s=0}\\
&=\, \frac{{ d}^2 A_i(0)}{{ d} s^2}\cos\left(\frac{\pi}{A}A_i(0)\right)\\
&\, -\frac{\pi}{A}\left(\frac{{ d} A_i(0)}{{ d} s}\right)^2\sin\left(\frac{\pi}{A}A_i(0)\right) \\
&=\, (-2\sin\alpha\cos\alpha-\frac12\vert p-q\vert(k(p)-k(q))\cos\alpha)
\cos\left( \frac{\pi}{A}A_i(0)\right) \\
&\,-\frac{\pi}{A}\vert p-q\vert^2 \sin^2\alpha\sin\left(\frac{\pi}{A}A_i(0)\right)\,. \\
\end{align*} 
Substituting the last two relations in the second inequality 
of~\eqref{eqdersec}, we get 
\begin{align*}
&(8\cos^2\alpha+\, 
2\langle p-q \,\vert\, k(p)\nu(p) -k(q)\nu(q)\rangle)\psi(A_{p(s),q(s)}(0))\\
&\geq\, \vert p-q\vert^2
\left\lbrace 
(-2\sin\alpha\cos\alpha-\frac12\vert p-q\vert(k(p)-k(q))\cos\alpha)
\cos\left( \frac{\pi}{A}A_i(0)\right)\right.\\
&\,\left.-\frac{\pi}{A}\vert p-q\vert^2 \sin^2\alpha\sin\left(\frac{\pi}{A}A_i(0)\right) 
\right\rbrace \,,
\end{align*}
hence, keeping in mind that 
$\tan\alpha=
\frac{-4}{E(t)\cos\left(\frac{\pi}{A}A_i \right) }$, we obtain
\begin{align}\label{eqfin}
&2\psi(A_{p(s),q(s)}(0))\langle p-q \,\vert\, \,k(p)\nu(p) -k(q)\nu(q)\rangle\nonumber\\
&\,+1/2\vert p-q\vert^3(k(p)-k(q))\cos\alpha 
\cos\left(\frac{\pi}{A}A_i(0)\right)\nonumber\\
&\,\geq -2\sin\alpha\cos\alpha\vert p-q\vert^2\cos\left(\frac{\pi}{A}A_i(0)\right)\nonumber\\
&\,-8\psi(A_{p(s),q(s)}(0))\cos^2\alpha
+\vert p-q\vert^4 \sin^2\alpha
\left[-\frac{\pi}{A}\sin\left(\frac{\pi}{A}A_i(0)\right) \right]\nonumber\\
&=\,-2\psi(A_{p(s),q(s)}(0))\cos^2\alpha
\left(\tan\alpha\frac{\vert p-q\vert^2}{\psi(A_{p(s),q(s)})}
\cos\left(\frac{\pi}{A}A_i(0)\right)+ 4\right)\nonumber\\
&\,+\vert p-q\vert^4\sin^2\alpha
\left[ -\frac{\pi}{A}\sin\left(\frac{\pi}{A}A_i(0)\right) \right] \nonumber\\
&=\,+\vert p-q\vert^4\sin^2\alpha
\left[ -\frac{\pi}{A}\sin\left(\frac{\pi}{A}A_i(0)\right) \right]\,. \nonumber\\
\end{align}
We consider now a time $t_0$ such that the derivative
$\frac{dE(t_0)}{dt}$ exists and we compute it with the Hamilton's trick (see \cite{hamilton}),
that is,
$$
\frac{dE(t_0)}{dt} = \left.\frac{\partial}{\partial
    t}\Phi_t(p,q)\right\vert_{t=t_0}
$$
for {\em any} pair $(p,q)$ such that $p, q\in\Gamma_{t_0}$ and $\frac{\vert
  p-q\vert^2}{\psi(A_{p,q})}=E(t_0)$.\\
Considering then a minimizing pair $(p,q)$ for $\Phi_{t_0}$ with
all the previous properties, by minimality, we are free to choose the 
``motion'' of the points $p(t)$, $q(t)$ ``inside'' the networks $\Gamma_t$
in computing such partial derivative.\\
Since locally the networks are moving by curvature and we know that
neither  $p$ nor $q$ coincides with the 3--point, 
we can find $\varepsilon>0$ and two  smooth curves $p(t), q(t)\in\Gamma_t$  for
every $t\in(t_0-\varepsilon,t_0+\varepsilon)$ such that
\begin{align*}
p(t_0) &=\, p \qquad \text{ and }\qquad \frac{dp(t)}{dt} = k(p(t), t)
~\nu(p(t),t)\,, \\
q(t_0) &=\, q \qquad \text{ and }\qquad \frac{dq(t)}{dt}= k(q(t),t)
~\nu(q(t),t)\,. 
\end{align*}
Then,
\begin{align}\label{eqderE}
\frac{dE(t_0)}{dt} =& 
\left.\frac{\partial}{\partial t}\Phi_{t}(p,q)\right\vert_{t=t_0}\nonumber\\
=&\frac{1}{(\psi(A_{p,q}(t)))^2}\left.
\left(\psi(A_{p,q}(t))
\frac{d\vert p(t)-q(t)\vert^2}{dt} - 
\vert p-q\vert^2 \frac{d\psi(A_{p,q}(t))}{dt}\right)\right\vert_{t=t_0}\,.
\end{align}
With a straightforward computation we get the following equalities, 
\begin{align*}
\left.\frac{{ d} \vert p(t)-q(t)\vert^2}{{ d} t} \right\vert_{t=t_0}
&\,= 2 \langle p-q\,\vert\, k(p)\nu(p) -k(q)\nu(q)\rangle\\
\left.\frac{{ d} (A(t))}{{ d} t}\right\vert_{t=t_0} 
&\,= -\frac{5\pi}{3}\\
\left.\frac{{ d} A_i(t)}{{ d} t}\right\vert_{t=t_0} 
&\,=\int_{\Gamma_{p,q}} \langle\underline{k}(s)\,\vert\nu_{\xi_{p,q}}\rangle\,ds
+ \frac12\vert p-q\vert\langle {\nu_{[p,q]}}\,\vert\, k(p)\nu(p)+k(q)\nu(q)\rangle\\
&\,= 2\alpha -\frac{5\pi}{3}-\frac 12 \vert p-q\vert(k(p)-k(q))\cos\alpha\\
\end{align*}
\begin{align*}
\left.\frac{{ d} \psi(A_{p,q}(t))}{{ d} t} \right\vert_{t=t_0}
&\,=\left.\frac{{ d} (A(t))}{{ d} t}\right\vert_{t=t_0}
\left[\frac{1}{\pi}\sin\left(\frac{\pi}{A(t_0)}A_i(t_0)\right)\right] \\
&\,+\cos\left(\frac{\pi}{A(t_0)}A_i(t_0) \right)
\left(\left.\frac{{ d} (A_i(t))}{{ d} t}\right\vert_{t=t_0}-\frac{A_i(t_0)}{A(t_0)}
\left.\frac{{ d} (A(t))}{{ d} t}\right\vert_{t=t_0} \right) \\
&\,=\frac{{ d} (A(t_0))}{{ d} t}
\left[\frac{1}{\pi}\sin\left(\frac{\pi}{A(t_0)}A_i(t_0)\right)
-\frac{A_i(t_0)}{A(t_0)}\cos\left(\frac{\pi}{A(t_0)}A_i(t_0) \right)\right] \\
&\,+\frac{{ d} (A_i(t_0))}{{ d} t}\cos\left(\frac{\pi}{A(t_0)}A_i(t_0) \right)\\
&\,=-\frac{5\pi}{3}\left[\frac{1}{\pi}\sin\left(\frac{\pi}{A(t_0)}A_i(t_0)\right)
-\frac{A_i(t_0)}{A(t_0)}\cos\left(\frac{\pi}{A(t_0)}A_i(t_0) \right)\right] \\
&\,+\left(2\alpha -\frac{5\pi}{3}-\frac 12 \vert p-q\vert(k(p)-k(q))\cos\alpha\right)
\cos\left(\frac{\pi}{A(t_0)}A_i(t_0) \right)\\
\end{align*}
where we wrote $\nu_{\xi_{p,q}}$ and $\nu_{[p,q]}$ for the exterior
unit normals to the region $A_i$, respectively at the
points of the geodesic $\xi_{p,q}$ and of the segment $[p,q]$.\\
Substituting these derivatives in equation~\eqref{eqderE} we get
\begin{align*}
\frac{dE(t_0)}{dt} 
=&\,\frac{2\langle p-q\,\vert\, k(p)\nu(p) -k(q)\nu(q)\rangle}{\psi(A_{p,q}(t_0))}\\
-&\frac{\vert p-q\vert^2}{(\psi(A_{p,q}(t_0)))^2}
\left\lbrace 
-\frac{5\pi}{3}\left[\frac{1}{\pi}\sin\left(\frac{\pi}{A(t_0)}A_i(t_0)\right)
-\frac{A_i(t_0)}{A(t_0)}\cos\left(\frac{\pi}{A(t_0)}A_i(t_0) \right)\right]
\right. \\
&\left. +\left(2\alpha -\frac{5\pi}{3}-\frac 12 \vert p-q\vert(k(p)-k(q))\cos\alpha\right)
\cos\left(\frac{\pi}{A(t_0)}A_i(t_0) \right)
 \right\rbrace \\
\end{align*}
and, by equation~\eqref{eqfin}, \\
\begin{align*}
\frac{dE(t_0)}{dt} 
\geq 
&\,-\frac{\vert p-q\vert^2}{(\psi(A_{p,q}))^2}
\left\lbrace 
-\frac{5}{3}\sin\left(\frac{\pi}{A}A_i\right)
+\frac{5\pi}{3}\frac{A_i}{A}\cos\left(\frac{\pi}{A}A_i \right) \right.\\
&\,\left. +\left(2\alpha -\frac{5\pi}{3}\right)
\cos\left(\frac{\pi}{A}A_i \right)+\frac{\pi}{A}\vert p-q \vert^2 \sin^2(\alpha) 
\sin\left(\frac{\pi}{A}A_i\right)
\right\rbrace\,.\\
\end{align*}

It remains to prove that the quantity 
\begin{align*}
&\frac{5}{3}\sin\left(\frac{\pi}{A}A_i\right)
-\frac{5\pi}{3}\frac{A_i}{A}\cos\left(\frac{\pi}{A}A_i \right) 
 +\left(\frac{5\pi}{3}-2\alpha\right)
\cos\left(\frac{\pi}{A}A_i \right)\\
&-\frac{\pi}{A}\vert p-q \vert^2 \sin^2(\alpha) 
\sin\left(\frac{\pi}{A}A_i\right)\\
&= \frac{5}{3}\sin\left(\frac{\pi}{A}A_i\right)
-\frac{5\pi}{3}\frac{A_i}{A}\cos\left(\frac{\pi}{A}A_i \right) 
 +\left(\frac{5\pi}{3}-2\alpha\right)
\cos\left(\frac{\pi}{A}A_i \right)\\
&-E(t)\sin^2(\alpha) 
\sin^2\left(\frac{\pi}{A}A_i\right)
\end{align*}
is positive.\\
If $0\leq \frac{A_i}{A}\leq \frac 13$,
we have
\begin{align*}
\frac{dE(t_0)}{dt} 
&\geq 
\,  
\frac{5}{3}\sin\left(\frac{\pi}{A}A_i\right)
-\frac{5\pi}{3}\frac{A_i}{A}\cos\left(\frac{\pi}{A}A_i \right) \\
& +\left(\frac{5\pi}{3}-2\alpha\right)
\cos\left(\frac{\pi}{A}A_i \right)-E(t)\sin^2(\alpha) 
\sin^2\left(\frac{\pi}{A}A_i\right)\\
&\, \geq 
\left(\frac{5\pi}{3}-2\alpha\right)
\cos\left(\frac{\pi}{A}A_i \right)-E(t)\sin^2(\alpha) 
\sin^2\left(\frac{\pi}{A}A_i\right)\\
&\, \geq
\left(\frac{\pi}{3}\right)
\cos\left(\frac{\pi}{3} \right)-E(t) 
\sin^2\left(\frac{\pi}{3}\right)>0\,.\\
\end{align*}
If $\frac 13\leq \frac{A_i}{A}\leq \frac 12$,
we get
\begin{align*}
\frac{dE(t_0)}{dt} 
&\geq 
\,  
\frac{5}{3}\sin\left(\frac{\pi}{A}A_i\right)
-\frac{5\pi}{3}\frac{A_i}{A}\cos\left(\frac{\pi}{A}A_i \right)\\ 
&+\left(\frac{5\pi}{3}-2\alpha\right)
\cos\left(\frac{\pi}{A}A_i \right)-E(t)\sin^2(\alpha) 
\sin^2\left(\frac{\pi}{A}A_i\right)\\
&\, \geq 
\frac{5}{3}\sin\left(\frac{\pi}{A}A_i\right)
-\frac{5\pi}{3}\frac{A_i}{A}\cos\left(\frac{\pi}{A}A_i \right) 
 -E(t)\sin^2(\alpha) 
\sin^2\left(\frac{\pi}{A}A_i\right)\\
&\, \geq
\frac{5}{3} \left( 
\sin\left(\frac{\pi}{3}\right)
-\frac{\pi}{3}\cos\left(\frac{\pi}{3}\right) 
 \right) 
 -E(t)>0\,.\\
\end{align*}
\end{proof}
\noindent
Now that we have proven the monotonicity of the quantity $E(t)$ also when the two points 
$p$ and $q$ belong to a closed curve, taking into account that the computation 
in~\cite[Section $4$]{mantegazza_novaga_tortorelli2004}
is still valid for the other situation,
the two subsequent theorems follow as in~\cite{mantegazza_novaga_tortorelli2004} and 
in~\cite{magni_mantegazza_novaga2013}:
 
\begin{prop}\label{emb}
If $\Omega$ is bounded and strictly convex, there exists a constant $C>0$ depending only on 
$\Gamma_0$ such that $E(t)>C>0$ for every $t\in\left[0,T \right)$.
Hence, the networks $\Gamma_t$ remain embedded 
in all the maximal time interval of existence of the flow.
\end{prop}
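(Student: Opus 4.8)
The plan is to derive the proposition from the monotonicity of $E$ already established, together with the continuity of $E$ and the embeddedness of $\Gamma_0$, by a standard first-time argument; no new geometric computation is needed.

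First I would record the monotonicity in the form needed. Proposition~\ref{lemet2} shows $E$ is nondecreasing on every time interval on which $0<E(t)<\tfrac12$ and the minimizing pair $(p,q)$ lies on the closed curve $\gamma^1$; as remarked right after its proof, the computations of~\cite[Section~4]{mantegazza_novaga_tortorelli2004} give the same conclusion for the other configurations of $(p,q)$, and it is exactly there --- when one of the two points may coincide with the end point $P\in\partial\Omega$ --- that the hypotheses ``$\Omega$ bounded and strictly convex'' enter, to fix the sign of the boundary term. So I treat as known: $E$ is monotone nondecreasing on every time interval on which $0<E(t)<\tfrac12$. I also use that $E$ is continuous on $[0,T)$ and that $E(0)>0$, the latter because $\Gamma_0$ is embedded.

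The core step is the claim that $E(t)\ge\delta$ for all $t\in[0,T)$, where $\delta:=\min\{E(0),\tfrac12\}>0$ depends only on $\Gamma_0$. I would argue by contradiction: if $E$ dipped below $\delta$, set $\bar t:=\inf\{t\in[0,T):E(t)<\delta\}$, so $E(\bar t)=\delta$ and $E\ge\delta$ on $[0,\bar t]$ by continuity. If $\delta=E(0)<\tfrac12$, then $0<E(\bar t)<\tfrac12$, so by continuity $0<E(t)<\tfrac12$ on a right-neighbourhood of $\bar t$, where the monotonicity gives $E(t)\ge E(\bar t)=\delta$, contradicting the definition of $\bar t$; in particular also $\bar t>0$, since near $0$ the same monotonicity keeps $E\ge E(0)=\delta$. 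If instead $E(0)\ge\tfrac12$, the same reasoning applied at the level $\tfrac12$ shows $E$ can never cross $\tfrac12$ downward --- immediately below $\tfrac12$ the function $E$ is nondecreasing, so such a crossing would be undone at once --- hence $E\ge\tfrac12=\delta$ throughout. (The hypothesis $E(0)>0$ also guarantees, by the very same argument, that $E$ never reaches $0$, so the regime $0<E<\tfrac12$ in which the monotonicity is available is indeed in force whenever it is invoked above.)

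Finally, taking $C:=\delta/2>0$, which depends only on $\Gamma_0$, we get $E(t)>C>0$ on $[0,T)$; since $E(t)>0$ is equivalent to the network having no self-intersections, $\Gamma_t$ remains embedded on the whole maximal time interval of existence. I do not anticipate a genuine obstacle: the substantial work --- the differential inequality for $E$ --- has already been done in Proposition~\ref{lemet2} and in~\cite{mantegazza_novaga_tortorelli2004}, and what is left is the elementary monotonicity-and-continuity bookkeeping, the only mildly delicate point being the handling of the threshold $\tfrac12$.
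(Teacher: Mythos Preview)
Your proposal is correct and is precisely the standard first-time/monotonicity argument the paper has in mind: the paper gives no explicit proof of this proposition but simply states that, once the monotonicity of $E$ in the regime $0<E<\tfrac12$ is available (Proposition~\ref{lemet2} together with \cite[Section~4]{mantegazza_novaga_tortorelli2004}), the result ``follows as in~\cite{mantegazza_novaga_tortorelli2004} and~\cite{magni_mantegazza_novaga2013}''. Your write-up supplies exactly those omitted details, and your identification of the role of the strict convexity of $\Omega$ (the boundary case $q=P$) matches how it is used in the cited reference.
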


\begin{lem}\label{emb2}
If $\Omega$ is strictly convex, the function
$E(\Gamma)$
defined on the class of $C^1$ networks without self-intersections (bounded or unbounded
and with or without end points or 3-points), is upper semi continuous with
respect to the $C^1_{loc}$ convergence.
Moreover, $E$ is dilation and translation invariant.
Consequently, every $C^1_{loc}$ limit $\Gamma_{\infty}$
of a sequence of rescaled networks $\widetilde{\Gamma}_t$ 
has no self-intersections, has multiplicity one 
and satisfies $E(\Gamma_{\infty})>C>0$, where the constant $C$ is given in Proposition~\ref{emb}
\end{lem}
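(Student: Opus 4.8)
The plan is to reproduce the arguments of \cite{mantegazza_novaga_tortorelli2004} and \cite{magni_mantegazza_novaga2013}, adapting them to the presence of the loop. Three things must be checked: (i) invariance of $E$ under dilations and translations; (ii) upper semicontinuity of $E$ along $C^1_{loc}$ convergence; (iii) the consequences for blow-up limits. Assertion (i) is immediate from the structure of $\Phi_t$: translations change nothing, while a dilation by $\lambda>0$ multiplies $|p-q|^2$, every enclosed area $A_{i,p,q}$ and the total loop area $A$ by $\lambda^2$. Hence the plain ratio $|p-q|^2/A_{p,q}$ is unchanged, and since the argument $\tfrac{\pi}{A}A_i$ of the sine is scale invariant, $\psi(A_{p,q})$ also scales like $\lambda^2$, so $|p-q|^2/\psi(A_{p,q})$ is unchanged; the values $4\sqrt3$ and $+\infty$ are constants. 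Taking the infimum, $E$ is dilation and translation invariant.

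For (ii), let $\Gamma_j\to\Gamma_\infty$ in $C^1_{loc}$ with every $\Gamma_j$ without self-intersections, and fix an admissible pair $(p_\infty,q_\infty)$ on $\Gamma_\infty$. Pass to a subsequence along which $E(\Gamma_j)\to\limsup_j E(\Gamma_j)$, and choose $p_j,q_j\in\Gamma_j$ with $p_j\to p_\infty$, $q_j\to q_\infty$ using the convergence. Let $\mathcal A_j'$ be the minimal region realizing $A_{p_j,q_j}$ (connected, by the first Remark after the definition of $\Phi_t$); from $\Phi_{\Gamma_j}(p_j,q_j)\ge E(\Gamma_j)$ we get the area bound $|\mathcal A_j'|\le|p_j-q_j|^2/E(\Gamma_j)$, so, after a further subsequence, the $\mathcal A_j'$ converge in $L^1$ to a region $\mathcal A_\infty'$ enclosed by $[p_\infty,q_\infty]$ and an arc of $\Gamma_\infty$. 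Since $\mathcal A_\infty'$ is a competitor for $A_{p_\infty,q_\infty}$, we obtain $A_{p_\infty,q_\infty}\le|\mathcal A_\infty'|=\lim_j|\mathcal A_j'|\le|p_\infty-q_\infty|^2/\limsup_j E(\Gamma_j)$, i.e. $\Phi_{\Gamma_\infty}(p_\infty,q_\infty)\ge\limsup_j E(\Gamma_j)$. When the loop of $\Gamma_\infty$ survives with a finite area $A_\infty\in(0,\infty)$ one argues in the same way with $\psi$ in place of the area, using that $\psi$ is monotone in $A_i$ on $[0,A/2]$ and jointly continuous and that $A_j\to A_\infty$; when the loop degenerates or escapes to infinity one uses $\psi(A_{p,q})\to A_i$ as $A\to\infty$. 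Taking the infimum over all pairs yields $E(\Gamma_\infty)\ge\limsup_j E(\Gamma_j)$, the claimed upper semicontinuity; moreover, choosing a would-be self-intersection pair $p_\infty=q_\infty$ in the above would give $0\ge\limsup_j E(\Gamma_j)$, so whenever $\liminf_j E(\Gamma_j)>0$ the limit $\Gamma_\infty$ is without self-intersections, and $E(\Gamma_\infty)$ is well defined.

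\textbf{The main obstacle} is the passage from $C^1_{loc}$ convergence to convergence of the enclosed areas: a priori the minimal region realizing $A_{p_j,q_j}$ could be long and thin and drift off to infinity, where one has no control. One has to use that $\Gamma_\infty$ is fixed and $\Gamma_j\to\Gamma_\infty$ locally to show that a region bounded by the (uniformly bounded) segment $[p_j,q_j]$ and a network arc and having uniformly bounded area is confined to a fixed ball, so that Hausdorff/$L^1$ subconvergence of the regions, hence convergence of their areas, follows. This is precisely the point at which one must have fixed, once and for all, the extension of $\psi$, $\Phi$, $E$ and of the notion of enclosed region to the class of possibly unbounded $C^1$ networks with or without $3$-points and end points, exactly as in \cite{mantegazza_novaga_tortorelli2004} and \cite{magni_mantegazza_novaga2013}; the remaining cases — pairs containing a $3$-point, where $\Phi\equiv4\sqrt3$, and pairs running off to infinity — are treated there.

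Finally, for (iii): the rescaled networks $\widetilde\Gamma_t$ differ from $\Gamma_t$ by a translation and a dilation, so by (i) we have $E(\widetilde\Gamma_t)=E(\Gamma_t)>C$ for all $t$, with $C$ the constant of Proposition~\ref{emb}. Hence for any $C^1_{loc}$ limit $\Gamma_\infty$ of a sequence $\widetilde\Gamma_{t_j}$, part (ii) gives $E(\Gamma_\infty)\ge C>0$; positivity of $E$ is exactly the absence of self-intersections. If $\Gamma_\infty$ had a point of multiplicity two, then over a fixed length two arcs of $\widetilde\Gamma_{t_j}$ would lie at vanishing mutual distance $\delta_j\to0$; taking $p_j,q_j$ on the two arcs at minimal distance, $|p_j-q_j|\le\delta_j+o(\delta_j)$ while every network region joining them encloses area $\gtrsim\delta_j$ times that length, so $\Phi_{\widetilde\Gamma_{t_j}}(p_j,q_j)\to0$, contradicting $E(\widetilde\Gamma_{t_j})>C$. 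Therefore $\Gamma_\infty$ has multiplicity one, which completes the proof.
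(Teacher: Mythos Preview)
Your proposal is correct and follows the same route as the paper, which in fact gives no independent proof of this lemma but simply states that it ``follows as in \cite{mantegazza_novaga_tortorelli2004} and \cite{magni_mantegazza_novaga2013}''; you have spelled out that argument, including the adaptation needed for the $\psi$-denominator coming from the loop, and correctly identified the area-convergence step as the only delicate point.
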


\section{Monotonicity formula}
Now we introduce the Huisken's monotonicity formula, adapted to our situation.\\
For every $x_0\in\mathbb R^2$, we define the {\em backward heat kernel} relative to $(x_0,T)$ as
$$
\rho_{x_0}(x,t)=\frac{e^{-\frac{\vert
    x-x_0\vert^2}{4(T-t)}}}{\sqrt{4\pi(T-t)}}\,,
$$
and the {\em Gaussian density function} 
$\Theta:\Gamma\times\left[0,T \right)\rightarrow \mathbb{R}$
as
$$
\Theta\left(x_0,t \right)=\int_{\Gamma_t}\rho_{x_0}(x,t)ds\,. 
$$
We call $\widehat{\Theta}(x_0)$ the limit as $t \rightarrow T$ of $\Theta\left(x_0,t \right)$,
that exists and is finite (see \cite[Proposition 2.12]{magni_mantegazza_novaga2013}).\\
The following results can be obtained with minor modifications 
from~\cite[Proposition 6.4, Lemma 6.5]{mantegazza_novaga_tortorelli2004}.\\
\begin{prop}[Monotonicity Formula]
For every $x_0\in\mathbb{R}^2$ and $t\in[0,T)$ the following identity holds
\begin{align*}
\frac{d\,}{dt}\int_{{\Gamma_t}} \rho_{x_0}(x,t)\,ds=
&\,-\int_{{\Gamma_t}} \left\vert\,\underline{k}+\frac{(x-x_0)^{\perp}}{2(T-t)}\right\vert^2
\rho_{x_0}(x,t)\,ds\\
&\,+\left\langle\,\frac{P-x_0}{2(T-t)}\,\biggl\vert\,\tau(1,t)\right\rangle
\rho_{x_0}(P,t)\,,\nonumber
\end{align*}
where $\tau(1,t)$ is the unit tangent vector to $\gamma^2(x,t)$ in the point $P$.\\
Integrating between $t_1$ and $t_2$ with $0\leq t_1\leq t_2<T$ we get
\begin{align*}
\int_{t_1}^{t_2}\int_{{\Gamma_t}}
\left\vert\,\underline{k}+\frac{(x-x_0)^{\perp}}{2(T-t)}\right\vert^2
\rho_{x_0}(x,t)\,ds\,dt = 
&\,\int_{{\Gamma_{t_1}}} \rho_{x_0}(x,t_1)\,ds -
\int_{{\Gamma_{t_2}}} \rho_{x_0}(x,t_2)\,ds\\
&\,+\int_{t_1}^{t_2}
\left\langle\,\frac{P-x_0}{2(T-t)}\,\biggl\vert\,\tau(1,t)\right\rangle
\rho_{x_0}(P,t)\,dt\,.\nonumber
\end{align*}
\end{prop}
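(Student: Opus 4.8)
The plan is to reproduce the computation of \cite[Proposition~6.4]{mantegazza_novaga_tortorelli2004}, the only delicate point being the boundary terms, which here live at the $3$-point $O$ and at the single fixed endpoint $P$. Since $\rho_{x_0}$ is smooth and bounded on $\overline\Omega\times[0,t]$ for every $t<T$, one may differentiate under the integral sign; writing the motion of each curve as $\gamma^i_t=\lambda^i\tau^i+\underline k^i$ as in \eqref{eveq} and using that $\partial_t(ds)=(\lambda^i_s-(k^i)^2)\,ds$ along $\gamma^i$ (a consequence of the commutation rule \eqref{commut}), one gets
\begin{equation*}
\frac{d}{dt}\int_{\Gamma_t}\rho_{x_0}\,ds=\int_{\Gamma_t}\Bigl(\partial_t\rho_{x_0}+\langle\nabla\rho_{x_0}\,\vert\,\lambda\tau+\underline k\rangle+\rho_{x_0}\,(\lambda_s-k^2)\Bigr)\,ds\,.
\end{equation*}
The purely tangential part is a total $s$-derivative, $\langle\nabla\rho_{x_0}\,\vert\,\lambda\tau\rangle+\rho_{x_0}\lambda_s=\partial_s(\lambda\rho_{x_0})$, whose integral over the network equals $\rho_{x_0}(O)\bigl(\lambda^1(1,t)-\lambda^1(0,t)-\lambda^2(0,t)\bigr)+\lambda^2(1,t)\,\rho_{x_0}(P,t)$: the first summand vanishes by \eqref{cond3}, while the second vanishes because differentiating the endpoint condition $\gamma^2(1,t)=P$ in time and projecting onto $\tau^2(1,t)$ and $\nu^2(1,t)$ forces $\lambda^2(1,t)=k^2(1,t)=0$. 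Hence only the normal part of the integrand survives.

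Next I would rewrite $\partial_t\rho_{x_0}+\langle\nabla\rho_{x_0}\,\vert\,\underline k\rangle-k^2\rho_{x_0}$ using the explicit backward heat kernel. From $\nabla\rho_{x_0}=-\frac{x-x_0}{2(T-t)}\rho_{x_0}$, $\nabla^2\rho_{x_0}=\bigl(\frac{(x-x_0)\otimes(x-x_0)}{4(T-t)^2}-\frac{\mathrm{Id}}{2(T-t)}\bigr)\rho_{x_0}$, $\partial_t\rho_{x_0}=\bigl(\frac{1}{2(T-t)}-\frac{|x-x_0|^2}{4(T-t)^2}\bigr)\rho_{x_0}$, the orthogonal splitting $|x-x_0|^2=\langle x-x_0,\tau\rangle^2+\langle x-x_0,\nu\rangle^2$, and the Frenet identity $\partial_s\langle\nabla\rho_{x_0}\,\vert\,\tau\rangle=(\nabla^2\rho_{x_0})(\tau,\tau)+k\,\langle\nabla\rho_{x_0}\,\vert\,\nu\rangle$, a short algebraic manipulation yields the pointwise identity
\begin{equation*}
\partial_t\rho_{x_0}+\langle\nabla\rho_{x_0}\,\vert\,\underline k\rangle-k^2\rho_{x_0}=-\Bigl|\,\underline k+\tfrac{(x-x_0)^{\perp}}{2(T-t)}\,\Bigr|^2\rho_{x_0}-\partial_s\langle\nabla\rho_{x_0}\,\vert\,\tau\rangle\,.
\end{equation*}

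Finally I would integrate this identity over $\Gamma_t$. The first term on the right produces exactly the asserted negative integral. Integrating $-\partial_s\langle\nabla\rho_{x_0}\,\vert\,\tau\rangle$ over the network leaves $\langle\nabla\rho_{x_0}(O)\,\vert\,\tau^1(0,t)-\tau^1(1,t)+\tau^2(0,t)\rangle-\langle\nabla\rho_{x_0}(P)\,\vert\,\tau^2(1,t)\rangle$; the contribution at $O$ vanishes by the $120^{\circ}$ angle condition, and, since $\nabla\rho_{x_0}(P)=-\frac{P-x_0}{2(T-t)}\rho_{x_0}(P,t)$, the contribution at $P$ equals $\bigl\langle\frac{P-x_0}{2(T-t)}\,\vert\,\tau(1,t)\bigr\rangle\rho_{x_0}(P,t)$, which is the boundary term in the statement. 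This establishes the differential monotonicity identity, and the integrated form is obtained by integrating it in time on $[t_1,t_2]$ and moving the terms $\int_{\Gamma_{t_j}}\rho_{x_0}\,ds$ to the right-hand side. The computation presents no real difficulty beyond careful bookkeeping; the one place where the geometry of the configuration genuinely enters is the cancellation of the boundary contributions at $O$ and $P$, which simultaneously relies on the angle condition $\tau^1(0,t)-\tau^1(1,t)+\tau^2(0,t)=0$, the relations \eqref{cond3} among the tangential velocities $\lambda^i$, and the vanishing $\lambda^2(1,t)=0$ forced by the fixed endpoint condition.
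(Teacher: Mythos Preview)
Your argument is correct and is precisely the computation of \cite[Proposition~6.4]{mantegazza_novaga_tortorelli2004} adapted to this configuration, which is exactly what the paper invokes without spelling out. The only points specific to the spoon are the boundary cancellations you identified---the angle condition and \eqref{cond3} at $O$, and $\lambda^2(1,t)=k^2(1,t)=0$ at the fixed endpoint $P$---and you handled them correctly.
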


\begin{lem}\label{stimadib} Setting $\vert P- x_0\vert=d$, 
the following estimate holds 
\begin{equation*}
\left\vert\int_{t}^{T}\left\langle\,\frac{P-x_0}{2(T-\xi)}\,\biggl\vert\,
\tau(1,\xi)\right\rangle\rho_{x_0}(P,\xi)\,d\xi\,\right\vert\leq
\frac{1}{\sqrt{2\pi}}\int\limits_{d/\sqrt{2(T-t)}}^{+\infty}e^{-y^2/2}\,dy\leq
1/2\,.
\end{equation*}
Then, for every $x_0\in\mathbb R^2$,
\begin{equation*}
\lim_{t\to  T}\int_{t}^{T}
\left\langle\,\frac{P-x_0}{2(T-\xi)}\,\biggl\vert\,\tau(1,\xi)\right\rangle
\rho_{x_0}(P,\xi)\,d\xi=0\,.
\end{equation*}
\end{lem}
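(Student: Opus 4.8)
=== PROOF PROPOSAL ===

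The plan is to estimate the boundary term in the monotonicity formula directly, exploiting that $\tau(1,\xi)$ is a unit vector and that $\rho_{x_0}(P,\xi)$ carries the Gaussian weight $e^{-d^2/(4(T-\xi))}/\sqrt{4\pi(T-\xi)}$ with $d=|P-x_0|$ fixed. First I would bound the integrand pointwise by Cauchy--Schwarz:
\begin{equation*}
\left\vert\left\langle\,\frac{P-x_0}{2(T-\xi)}\,\biggl\vert\,\tau(1,\xi)\right\rangle\rho_{x_0}(P,\xi)\right\vert\leq\frac{d}{2(T-\xi)}\cdot\frac{e^{-\frac{d^2}{4(T-\xi)}}}{\sqrt{4\pi(T-\xi)}}\,.
\end{equation*}
Then I would perform the change of variables $y=d/\sqrt{2(T-\xi)}$, so that $T-\xi=d^2/(2y^2)$ and $d\xi=(d^2/y^3)\,dy$; as $\xi$ runs from $t$ to $T$, the variable $y$ runs from $d/\sqrt{2(T-t)}$ to $+\infty$. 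A direct substitution shows the right-hand side above integrates exactly to $\frac{1}{\sqrt{2\pi}}\int_{d/\sqrt{2(T-t)}}^{+\infty}e^{-y^2/2}\,dy$, which gives the first claimed inequality.

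For the second inequality, I would simply note that $\frac{1}{\sqrt{2\pi}}\int_{\mathbb R}e^{-y^2/2}\,dy=1$, hence $\frac{1}{\sqrt{2\pi}}\int_{a}^{+\infty}e^{-y^2/2}\,dy\leq\frac{1}{\sqrt{2\pi}}\int_{\mathbb R}e^{-y^2/2}\,dy=1$; in fact the tail from $a\ge 0$ is at most half the total mass, giving the bound $1/2$. (If $x_0=P$, i.e. $d=0$, the boundary term vanishes identically and there is nothing to prove, so one may assume $d>0$.) Finally, for the limit statement, as $t\to T$ the lower endpoint $d/\sqrt{2(T-t)}\to+\infty$ (again using $d>0$), and the tail of a convergent integral tends to zero; hence the expression tends to $0$. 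When $d=0$ the quantity is identically $0$ and the limit is trivially $0$.

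The computation is entirely routine; the only point requiring a little care is the substitution and the verification that the Jacobian produces precisely the Gaussian $e^{-y^2/2}$ with the stated constant. The sign of $\langle P-x_0\mid\tau(1,\xi)\rangle$ is irrelevant since we pass to absolute values immediately, and no regularity beyond continuity of $\tau(1,\cdot)$ on $[0,T)$ — which is part of the smoothness of the flow — is needed. I do not anticipate any genuine obstacle.
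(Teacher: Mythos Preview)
Your proposal is correct and is precisely the standard computation: the paper itself does not spell out a proof but refers to \cite[Lemma~6.5]{mantegazza_novaga_tortorelli2004}, where exactly this argument (Cauchy--Schwarz on the unit tangent, then the substitution $y=d/\sqrt{2(T-\xi)}$ turning the integral into a Gaussian tail) is carried out. Your handling of the case $d=0$ and of the limit $t\to T$ is also the expected one, so there is nothing to add.
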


\noindent
We rescale now the flow in its maximal time interval $\left[0,T \right)$.\\
Fixed $x_0\in\mathbb R^2$, let $\widetilde{F}_{x_0}:\Gamma\times
[-1/2\log{T},+\infty)\to\mathbb R^2$ be the map
$$
\widetilde{F}_{x_0}(p,\mathfrak t)=\frac{F(p,t)-x_0}{\sqrt{2(T-t)}}\qquad
\mathfrak t(t)=-\frac{1}{2}\log{(T-t)}
$$
then, the rescaled networks are given by 
$$
\widetilde{\Gamma}_{x_0,\mathfrak t}=\frac{\Gamma_t-x_0}{\sqrt{2(T-t)}}
$$
and they evolve according to the equation 
$$
\frac{\partial\,}{\partial
  \mathfrak t}\widetilde{F}_{x_0}(p,\mathfrak t)=\widetilde{\underline{v}}(p,\mathfrak t)+\widetilde{F}_{x_0}(p,\mathfrak t)
$$
where 
$$
\widetilde{\underline{v}}(p,\mathfrak t)=\sqrt{2(T-t(\mathfrak t))}\cdot\underline{v}(p,t(\mathfrak t))=
\widetilde{\underline{k}}+\widetilde{\underline{\lambda}}
=\widetilde{k}\widetilde{\nu}+\widetilde{\lambda}\widetilde{\tau}
\; \text{ and }\;
t(\mathfrak t)=T-e^{-2\mathfrak t}\,.
$$
Hence we obtained a smooth flow of spoon-shaped networks $\widetilde{\Gamma}_{\mathfrak t}$
defined for $\mathfrak t\in [-\frac12 \log T, +\infty)$.\\
By a straightforward computation
(\cite{huisken1990},~\cite[Lemma~6.7]{mantegazza_novaga_tortorelli2004}) 
we have the following
rescaled version of the monotonicity formula.

\begin{prop}[Rescaled Monotonicity Formula]
Let $x_0\in\mathbb R^2$ and set  
$$
\widetilde{\rho}(x)=e^{-\frac{\vert x\vert^2}{2}}\,.
$$
For every $\mathfrak t\in[-1/2\log{T},+\infty)$ the following identity holds
\begin{equation*}
\frac{d\,}{d\mathfrak t}\int_{\widetilde{\Gamma}_{x_0,\mathfrak t}}
\widetilde{\rho}(x)\,d\sigma=
-\int_{\widetilde{\Gamma}_{x_0,\mathfrak t}}\vert
\,\widetilde{\underline{k}}+x^\perp\vert^2\widetilde{\rho}(x)\,d\sigma
+\left\langle\,{\widetilde{P}_{x_0,\mathfrak t}}
\,\Bigl\vert\,{\tau}(1,t(\mathfrak t))\right\rangle
\widetilde{\rho}(\widetilde{P}_{x_0,\mathfrak t})
\end{equation*}
where $\widetilde{P}_{x_0,\mathfrak t}=\frac{P-x_0}{\sqrt{2(T-t(\mathfrak t))}}$.\\  
Integrating between $\mathfrak t_1$ and $\mathfrak t_2$ with 
$-1/2\log{T}\leq \mathfrak t_1\leq \mathfrak t_2<+\infty$ we get
\begin{align} 
\int_{\mathfrak t_1}^{\mathfrak t_2}\int_{\widetilde{\Gamma}_{x_0,\mathfrak t}}\vert
\,\widetilde{\underline{k}}+x^\perp\vert^2\widetilde{\rho}(x)\,d\sigma\,d\mathfrak t=
&\, \int_{\widetilde{\Gamma}_{x_0,\mathfrak t_1}}\widetilde{\rho}(x)\,d\sigma - 
\int_{\widetilde{\Gamma}_{x_0,\mathfrak t_2}}\widetilde{\rho}(x)\,d\sigma\label{reseqmonfor-int}\\
&\,\int_{\mathfrak t_1}^{\mathfrak t_2}\left\langle\,{\widetilde{P}_{x_0,\mathfrak t}}\,
\Bigl\vert\,{\tau}(1,t(\mathfrak t))\right
\rangle\widetilde{\rho}(\widetilde{P}_{x_0,\mathfrak t})\,d\mathfrak t\,.\nonumber
\end{align}
\end{prop}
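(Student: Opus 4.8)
The plan is to deduce the rescaled identity from the (non--rescaled) Monotonicity Formula already established, via the change of time variable $\mathfrak t(t)=-\frac12\log(T-t)$, equivalently $t(\mathfrak t)=T-e^{-2\mathfrak t}$, together with the parabolic rescaling defining $\widetilde\Gamma_{x_0,\mathfrak t}=\frac{\Gamma_t-x_0}{\sqrt{2(T-t)}}$. First I would record how the relevant quantities transform under $y=\frac{x-x_0}{\sqrt{2(T-t)}}$: the arclength scales as $ds=\sqrt{2(T-t)}\,d\sigma$, the curvature vector as $\underline k=\frac{1}{\sqrt{2(T-t)}}\,\widetilde{\underline k}$, while the unit tangent $\tau$ and the projection $(\cdot)^\perp$ onto the normal line are scale invariant. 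I would also note the elementary identities $\frac{dt}{d\mathfrak t}=2(T-t)$ and
$$
\rho_{x_0}(x,t)=\frac{e^{-\frac{\vert x-x_0\vert^2}{4(T-t)}}}{\sqrt{4\pi(T-t)}}=\frac{1}{\sqrt{4\pi(T-t)}}\,\widetilde\rho(y)\,,\qquad \frac{P-x_0}{2(T-t)}=\frac{\widetilde P_{x_0,\mathfrak t}}{\sqrt{2(T-t)}}\,,
$$
so that $\rho_{x_0}(x,t)\,ds=\frac{1}{\sqrt{2\pi}}\,\widetilde\rho(y)\,d\sigma$ and $\rho_{x_0}(P,t)=\frac{1}{\sqrt{4\pi(T-t)}}\widetilde\rho(\widetilde P_{x_0,\mathfrak t})$; in particular $\int_{\widetilde\Gamma_{x_0,\mathfrak t}}\widetilde\rho\,d\sigma=\sqrt{2\pi}\,\Theta(x_0,t(\mathfrak t))$ as an identity of functions of $\mathfrak t$.

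Next I would differentiate this last identity by the chain rule, obtaining $\frac{d}{d\mathfrak t}\int_{\widetilde\Gamma_{x_0,\mathfrak t}}\widetilde\rho\,d\sigma=\sqrt{2\pi}\cdot 2(T-t)\cdot\frac{d}{dt}\Theta(x_0,t)$, and substitute the right--hand side of the Monotonicity Formula. In the gradient term I would use $\underline k+\frac{(x-x_0)^\perp}{2(T-t)}=\frac{1}{\sqrt{2(T-t)}}\bigl(\widetilde{\underline k}+y^\perp\bigr)$, so that the factor $\frac{1}{2(T-t)}$ coming from squaring cancels the $2(T-t)$, and the remaining numerical constants combine with $\sqrt{2\pi}$ to $1$, leaving exactly $-\int_{\widetilde\Gamma_{x_0,\mathfrak t}}\vert\widetilde{\underline k}+x^\perp\vert^2\widetilde\rho(x)\,d\sigma$. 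For the boundary term I would use the identities above for $\frac{P-x_0}{2(T-t)}$ and $\rho_{x_0}(P,t)$ together with the invariance of $\tau$; again the constants collapse to $1$, producing $\langle\widetilde P_{x_0,\mathfrak t}\,\vert\,\tau(1,t(\mathfrak t))\rangle\,\widetilde\rho(\widetilde P_{x_0,\mathfrak t})$. This gives the stated differential identity, and the integrated version follows at once by integrating in $\mathfrak t\in[\mathfrak t_1,\mathfrak t_2]$ (equivalently, by applying $d\mathfrak t=\frac{dt}{2(T-t)}$ directly to the integrated form of the Monotonicity Formula, the boundary integrand transforming in the same way).

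I do not expect a genuine obstacle: the whole content is the bookkeeping of scaling weights and the verification that all numerical constants cancel, exactly as in the closed--curve case of Huisken~\cite{huisken1990} and in~\cite[Lemma~6.7]{mantegazza_novaga_tortorelli2004}. The one mild point worth double--checking is that the boundary term of the Monotonicity Formula carries the correct sign and power of $(T-t)$, so that after rescaling it becomes the bounded quantity $\langle\widetilde P_{x_0,\mathfrak t}\,\vert\,\tau\rangle\widetilde\rho(\widetilde P_{x_0,\mathfrak t})$ — a consistency check with Lemma~\ref{stimadib}, which shows its time integral tends to $0$ as $t\to T$.
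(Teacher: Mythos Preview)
Your proposal is correct and is exactly the ``straightforward computation'' the paper alludes to when it cites \cite{huisken1990} and \cite[Lemma~6.7]{mantegazza_novaga_tortorelli2004}: you derive the rescaled identity from the already--established Monotonicity Formula via the change of variables $\mathfrak t=-\frac12\log(T-t)$ and the parabolic dilation, tracking how $ds$, $\underline k$, $\rho_{x_0}$ and the boundary term transform. The bookkeeping of constants you outline checks out and matches the paper's approach precisely.
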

\noindent
Consequently, we have the analogue of Lemma~\ref{stimadib}, whose proof follows
in the same way, substituting the rescaled quantities.

\begin{lem}\label{rescstimadib}
The following estimate holds 
\begin{equation*}
\left\vert\int_{\mathfrak t}^{+\infty}\left\langle\,{\widetilde{P}_{x_0,\xi}}\,
\Bigl\vert\,{\tau}(1,t(\xi))\right
\rangle\widetilde{\rho}(\widetilde{P}_{x_0,\xi})\,d\xi\right\vert\leq \sqrt{\pi/2}\,.
\end{equation*}
Then, for every $x_0\in\mathbb R^2$,
\begin{equation*}
\lim_{\mathfrak t\to  +\infty}\int_{\mathfrak t}^{+\infty}\left\langle\,{\widetilde{P}_{x_0,\xi}}\,
\Bigl\vert\,{\tau}(1,t(\xi))\right
\rangle\widetilde{\rho}(\widetilde{P}_{x_0,\xi})\,d\xi=0\,.
\end{equation*}
\end{lem}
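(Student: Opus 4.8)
The plan is to mimic the proof of Lemma~\ref{stimadib}, simply transporting every quantity through the rescaling map. First I would rewrite the rescaled boundary term in terms of the unrescaled data: since $\widetilde P_{x_0,\mathfrak t}=\frac{P-x_0}{\sqrt{2(T-t(\mathfrak t))}}$ and $\widetilde\rho(x)=e^{-|x|^2/2}$, one checks directly that $\widetilde\rho(\widetilde P_{x_0,\mathfrak t})=e^{-\frac{|P-x_0|^2}{4(T-t)}}$, and that the change of time variable $\mathfrak t=-\tfrac12\log(T-t)$ gives $d\mathfrak t=\frac{dt}{2(T-t)}$. Writing $d=|P-x_0|$ and using $|\langle\widetilde P_{x_0,\mathfrak t}\mid\tau(1,t(\mathfrak t))\rangle|\leq|\widetilde P_{x_0,\mathfrak t}|=\frac{d}{\sqrt{2(T-t)}}$, the integrand is bounded in absolute value by $\frac{d}{\sqrt{2(T-t)}}e^{-\frac{d^2}{4(T-t)}}\cdot\frac{1}{2(T-t)}$.

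Next I would perform the substitution $y=\frac{d}{\sqrt{2(T-t)}}$, exactly as in the proof of Lemma~\ref{stimadib}. As $t$ runs from the given value up to $T$ (equivalently $\mathfrak t\to+\infty$), $y$ runs from $\frac{d}{\sqrt{2(T-t)}}$ up to $+\infty$, and a direct computation of the differential converts the bound above into $\frac{1}{\sqrt{2\pi}}\int_{d/\sqrt{2(T-t)}}^{+\infty}e^{-y^2/2}\,dy$ times a harmless constant; taking into account the normalisation $\widetilde\rho$ carries (no $\sqrt{4\pi(T-t)}$ denominator, unlike $\rho_{x_0}$), the resulting uniform bound is $\sqrt{\pi/2}$ rather than $1/2$. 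This proves the first inequality.

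For the limit statement, I would simply observe that the integral $\int_{\mathfrak t}^{+\infty}\langle\widetilde P_{x_0,\xi}\mid\tau(1,t(\xi))\rangle\widetilde\rho(\widetilde P_{x_0,\xi})\,d\xi$ is the tail of an absolutely convergent integral: by the bound just proved with $\mathfrak t$ replaced by any starting time, the integrand is dominated (after the substitution) by an integrable function on $[-\tfrac12\log T,+\infty)$, so the tail tends to $0$ as $\mathfrak t\to+\infty$ by dominated convergence. Alternatively, when $d>0$ the lower limit $\frac{d}{\sqrt{2(T-t)}}\to+\infty$ forces the Gaussian tail integral to vanish; when $d=0$ the integrand is identically zero. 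Either way the conclusion follows.

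I do not expect any real obstacle here: the lemma is a routine adaptation of Lemma~\ref{stimadib}, and indeed the text explicitly says ``whose proof follows in the same way, substituting the rescaled quantities.'' The only point requiring a moment's care is bookkeeping the different normalisation constant in $\widetilde\rho$ versus $\rho_{x_0}$, which is why the constant $1/2$ becomes $\sqrt{\pi/2}$; getting that numerical constant right is the whole of the work.
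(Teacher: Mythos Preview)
Your proposal is correct and is exactly the approach the paper intends: as stated just before the lemma, the proof ``follows in the same way, substituting the rescaled quantities,'' and you carry this out faithfully. The only minor imprecision is in your description of the intermediate step---after the substitution $y=d/\sqrt{2(T-t)}$ the integral becomes exactly $\int_{d/\sqrt{2(T-t)}}^{+\infty}e^{-y^2/2}\,dy$ (no extra $1/\sqrt{2\pi}$ factor or ``harmless constant''), which is bounded by $\int_0^{+\infty}e^{-y^2/2}\,dy=\sqrt{\pi/2}$; your final bound and limit argument are correct.
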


\section{Proof of Theorem \ref{thmain}}
\begin{defn}
We call \emph{infinite flat triod} a connected planar set composed by 
three half-lines intersecting in the origin of $\mathbb{R}^2$, forming angles of $120$ degrees.
\end{defn}

\begin{defn}\label{BS}
We call \emph{Brakke spoon} a spoon-shaped network
which shrinks in a self-similar way during the evolution.
\end{defn}

\begin{center}
\begin{tikzpicture}[scale=.75]
\draw[color=black!30!white]
(-6.25,0) to[out= 0,in=-180, looseness=1] (-4.5,0);
\node[inner sep=0pt] at (-4,0)
    {\includegraphics[width=.375\textwidth]{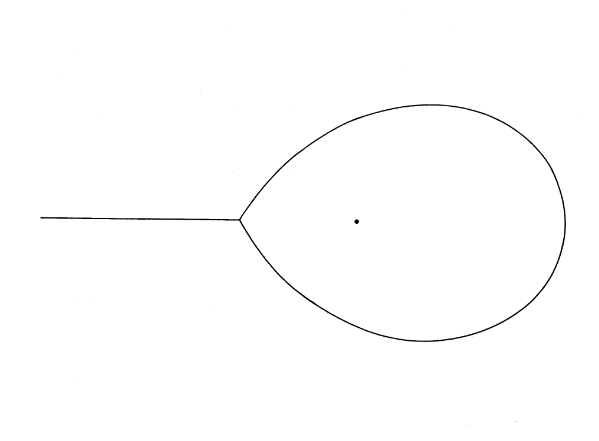}};
\draw[color=black!30!white]
(1.25,0)to[out= 135,in=-45, looseness=1] (1.1,0.15)
(1.25,0)to[out= -135,in=45, looseness=1] (1.1,-0.15)
(-3.25,3)to[out= -45,in=135, looseness=1] (-3.1,2.85)
(-3.25,3)to[out= 45,in=-135, looseness=1] (-3.4,2.85)
(-3.25,-3) to[out= 90,in=-90, looseness=1] (-3.25,3)
(-4.75,0) to[out= 0,in=-180, looseness=1] (1.25,0);
\path[font=\large]
 (-3,-0.1) node[above] {$O$};
\end{tikzpicture}
\end{center}
\begin{fig}
A Brakke spoon.
\end{fig}

\noindent 
A Brakke spoon is composed by a half-line which intersects a closed curve, 
forming angles of $120$ degrees and
satisfying the equation $k+\left\langle x\vert \nu \right\rangle =0$ 
for all point $x$ of the curve, where $k$ is the curvature and $\nu$
the unit normal vector in the point $x$.
This configuration was mentioned in~\cite{brakke}
as an example of evolving network with a loop shrinking 
down to a point, leaving a half-line that then vanishes instantaneously.\\ 
If we require that this particular spoon-shaped network 
is embedded, with multiplicity $1$, without self-intersections
and with $120$ degrees angles at the triple junction, then
it is unique, up to translation, rotation and dilation.

\begin{prop}\label{resclimit}
Assume that the length of the non-closed curve $\gamma^2(x,t)$ is
uniformly bounded away from zero for $t\in[0,T)$.
Then, for every $x_0\in\mathbb R^2$ and for every subset $\mathcal I$ of  $[-1/2\log
T,+\infty)$ with infinite Lebesgue measure, 
there exists a sequence of rescaled times $\mathfrak t_j\to+\infty$,  
with $\mathfrak t_j\in{\mathcal I}$, such that the sequence of rescaled networks
$\widetilde{\Gamma}_{x_0,\mathfrak t_{j}}$ converges in the $C^1_{\rm loc}$ topology
to a limit set 
$\Gamma_\infty$
which, {\em if not empty}, is one of the following:
\begin{itemize}
\item a half-line from the origin with multiplicity one
 (in this case $\widehat\Theta(x_0)=1/2$).
\item a straight line through the origin with multiplicity one (in this case $\widehat\Theta(x_0)=1$);
\item an infinite flat triod centered at the origin with multiplicity
  one (in this case $\widehat\Theta(x_0)=3/2$);
\item a Brakke spoon  with multiplicity one (in this case $\widehat\Theta(x_0)>3/2$). 
\end{itemize}
Moreover, for every sequence of rescaled networks 
$\widetilde{\Gamma}_{x_0,\mathfrak t_j}$ converging at
least in the $C^1_{\rm loc}$ topology to a limit ${\Gamma}_\infty$,
as $\mathfrak t_j\to+\infty$, we have
\begin{equation}\label{gggg}
\lim_{j\to\infty}\frac{1}{\sqrt{2\pi}}\int_{\widetilde{\Gamma}_{x_0,\mathfrak t_j}}\widetilde{\rho}\,
d\sigma=\frac{1}{\sqrt{2\pi}}
\int_{\Gamma_\infty}\widetilde{\rho}\,d\sigma=\widehat{\Theta}(x_0)\,.
\end{equation}
\end{prop}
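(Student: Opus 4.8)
### Proof plan for Proposition~\ref{resclimit}

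The plan is to run Huisken's rescaling-and-compactness machinery, exactly as in~\cite[Section~6]{mantegazza_novaga_tortorelli2004} and~\cite{magni_mantegazza_novaga2013}, with the only genuinely new input being the classification of the limit set when it contains a \emph{loop}. First I would fix $x_0$ and a set $\mathcal I$ of infinite measure. By the rescaled monotonicity formula, integrating \eqref{reseqmonfor-int} from $-\tfrac12\log T$ to $+\infty$ and using Lemma~\ref{rescstimadib} to absorb the boundary term coming from the end point $P$, the integral $\int_{-1/2\log T}^{+\infty}\int_{\widetilde\Gamma_{x_0,\mathfrak t}}|\widetilde{\underline k}+x^\perp|^2\widetilde\rho\,d\sigma\,d\mathfrak t$ is finite. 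Hence I can extract $\mathfrak t_j\to+\infty$ with $\mathfrak t_j\in\mathcal I$ along which $\int_{\widetilde\Gamma_{x_0,\mathfrak t_j}}|\widetilde{\underline k}+x^\perp|^2\widetilde\rho\,d\sigma\to 0$. Combined with the uniform-in-$\mathfrak t$ bounds on curvature and its derivatives coming from Proposition~\ref{pluto1000} (valid because the length of $\gamma^2$ is bounded below by hypothesis and the $L^2$-norm of $k$ on the rescaled networks is controlled — again via the monotonicity formula and the length bound for $\gamma^2$), the rescaled networks subconverge in $C^1_{\rm loc}$, and actually in $C^\infty_{\rm loc}$ away from the origin, to a limit $\Gamma_\infty$ satisfying $\underline k+x^\perp=0$ pointwise. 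By Lemma~\ref{emb2}, $\Gamma_\infty$ has no self-intersections and multiplicity one, and $E(\Gamma_\infty)\ge C>0$.

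Next I would classify such $\Gamma_\infty$. It is a (possibly empty) embedded $C^1$ network, each arc of which is either a line through the origin, a half-line from the origin, or a piece of an Abresch–Langer curve, and the arcs can meet only at the origin (if at all) with $120^\circ$ angles inherited in the limit from the $120^\circ$ condition at the $3$-point. The embeddedness/multiplicity-one constraint together with the $120^\circ$ condition leaves only: the empty set; a single half-line; a single full line; an infinite flat triod; or a configuration consisting of one half-line emanating from the origin together with a simple closed self-shrinker attached at the origin forming $120^\circ$ angles there. For the last case, I invoke \cite[Theorem~3]{chen_guo}: among the shrinkers $k+\langle x|\nu\rangle=0$ that are simple closed curves meeting a line at prescribed angles, symmetric and convex, there is a unique one — so the loop is forced to be that specific symmetric convex Abresch–Langer curve, and $\Gamma_\infty$ is exactly a Brakke spoon. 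The Gaussian density values $\widehat\Theta(x_0)$ in each case follow by direct evaluation of $\tfrac1{\sqrt{2\pi}}\int_{\Gamma_\infty}\widetilde\rho\,d\sigma$: a half-line gives $1/2$, a line gives $1$, a triod gives $3/2$, and the spoon gives something strictly larger than $3/2$ (half-line contributes $1/2$, the loop contributes a positive amount, and by Gauss–Bonnet / the shrinker equation the loop's weighted length exceeds that of an extra full line, so the total exceeds $3/2$).

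For the final identity \eqref{gggg}: the function $\mathfrak t\mapsto \tfrac1{\sqrt{2\pi}}\int_{\widetilde\Gamma_{x_0,\mathfrak t}}\widetilde\rho\,d\sigma$ equals $\Theta(x_0,t(\mathfrak t))$ up to the normalization, and by the (unrescaled) monotonicity formula together with Lemma~\ref{stimadib} it has a limit as $\mathfrak t\to+\infty$, namely $\widehat\Theta(x_0)$, which exists and is finite by \cite[Proposition~2.12]{magni_mantegazza_novaga2013}. So $\lim_{j\to\infty}\tfrac1{\sqrt{2\pi}}\int_{\widetilde\Gamma_{x_0,\mathfrak t_j}}\widetilde\rho\,d\sigma=\widehat\Theta(x_0)$ for \emph{every} such sequence. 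It remains to identify this with $\tfrac1{\sqrt{2\pi}}\int_{\Gamma_\infty}\widetilde\rho\,d\sigma$; this is where one must rule out loss of mass at infinity and at the origin when passing to the $C^1_{\rm loc}$ limit. Loss at the origin is excluded because the networks are smooth there with bounded curvature (length of $\gamma^2$ bounded below prevents the $3$-point from being a rescaling concentration point in a bad way), and loss at infinity is excluded by a standard tail estimate: the uniform $L^2$ bound on $\widetilde k$ plus the shrinker-type drift in the monotonicity formula force the weighted length of $\widetilde\Gamma_{x_0,\mathfrak t_j}\cap\{|x|\ge R\}$ to be small uniformly in $j$ for large $R$ (this is the standard density-bound argument, e.g. the networks have at most linearly-in-$R$ many points in balls of radius $R$ by the monotonicity formula, so $\int_{|x|\ge R}\widetilde\rho\,d\sigma\to0$ as $R\to\infty$ uniformly).

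The main obstacle I anticipate is precisely this classification step in the loop case: ensuring that the only self-shrinking embedded loop compatible with a half-line at $120^\circ$, multiplicity one, and $E(\Gamma_\infty)>0$ is the unique symmetric convex Abresch–Langer curve supplied by \cite[Theorem~3]{chen_guo} — one must check that the hypotheses of that theorem (symmetry and convexity of the relevant Abresch–Langer curve, the correct angle condition with the line) genuinely match the limit configuration, and that no non-convex or multiply-looped shrinker can sneak in, which is where the embeddedness measure $E$ from Proposition~\ref{emb} and Lemma~\ref{emb2} does the essential work.
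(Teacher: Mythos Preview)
Your overall architecture matches the paper's, but there is a genuine gap in the compactness step. You invoke Proposition~\ref{pluto1000} to get uniform bounds on the curvature and its derivatives for the rescaled networks. That proposition, however, is stated for the \emph{unrescaled} flow and requires two hypotheses: (i) the lengths of \emph{both} curves are bounded below, and (ii) the $L^2$ norm of $k$ on $\Gamma_t$ is uniformly bounded. Neither holds here: the length $L_1$ of the closed curve may well collapse (indeed, that is exactly what eventually happens), and the unrescaled $\int_{\Gamma_t}k^2\,ds$ blows up at a genuine singularity --- that is the content of Theorem~\ref{infinitecurvature}. An $L^2$ bound on $\widetilde k$ for the \emph{rescaled} networks does not feed back into Proposition~\ref{pluto1000}. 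The paper instead argues directly: from $\int_{\widetilde\Gamma_{x_0,\mathfrak t_j}}|\widetilde k+\langle x\,|\,\nu\rangle|^2\widetilde\rho\,d\sigma\to 0$ and the fact that $|\langle x\,|\,\nu\rangle|\le R$ and $\widetilde\rho\ge c_R>0$ on $B_R$, one gets $\widetilde k$ bounded in $L^2(B_R)$ along the subsequence; this gives $W^{2,2}_{\rm loc}$ compactness, hence $C^1_{\rm loc}$ subconvergence after reparametrization. Smoothness of $\Gamma_\infty$ then comes a posteriori by bootstrapping the shrinker equation $k_\infty=-\langle x\,|\,\nu\rangle$, not from a priori $C^\infty$ estimates on the approximating networks.

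There is also a geometric slip in your classification: you assert that the arcs of $\Gamma_\infty$ ``can meet only at the origin.'' For the Brakke spoon this is false. A half-line satisfying $\langle x\,|\,\nu\rangle=0$ lies on a ray through the origin, but its endpoint --- the triple junction with the closed shrinker --- sits at positive distance from the origin; the origin lies in the interior of the loop. The triple junction of $\Gamma_\infty$ is at the origin only in the flat-triod case (when the rescaled $3$-points converge to $x_0$). This matters for invoking \cite[Theorem~3]{chen_guo}: you are matching an embedded shrinking loop meeting a radial half-line at $120^\circ$ at some point away from the origin, not a configuration with vertex at the origin. Finally, your argument that $\widehat\Theta(x_0)>3/2$ for the spoon is only heuristic; the paper simply cites \cite[Lemma~8.4]{ilmanen_neves_schulze2014} for this.
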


\begin{proof}
We assume that we have a sequence of rescaled networks $\widetilde{\Gamma}_{x_0,\mathfrak t_j}$ 
that converges to a limit $\Gamma_\infty$ as $j \rightarrow \infty$ in the $C^1_{loc}$ topology.
As in the proof of Proposition~2.19
in~\cite{magni_mantegazza_novaga2013}, 
we can pass to the limit in the following Gaussian integral:
$
\lim_{j\to\infty}\frac{1}{\sqrt{2\pi}}\int_{\widetilde{\Gamma}_{x_0,\mathfrak t_j}}\widetilde{\rho}\,d\sigma=\frac{1}{\sqrt{2\pi}}
\int_{\Gamma_\infty}\widetilde{\rho}\,d\sigma\,.
$
Consequently, 
$$
\frac{1}{\sqrt{2\pi}}\int_{\widetilde{\Gamma}_{x_0,\mathfrak t_j}}\widetilde{\rho}\,d\sigma
=\int_{\Gamma_{t(\mathfrak t_j)}}\rho_{x_0}(x,t(\mathfrak t_j))\,ds
=\Theta(x_0,t(\mathfrak t_j))\to\widehat{\Theta}(x_0)\,,
$$
as $j\to\infty$ and equality~\eqref{gggg} follows.\\
If in the rescaled monotonicity formula~\eqref{reseqmonfor-int} 
we set $\mathfrak t_1=-\frac12 \log T$ and we let $\mathfrak t_2$ go to $+\infty$,
considering the result in Lemma~\ref{rescstimadib},
we get
$
\int^{+\infty} _{-1/2 \log T} \int_{\widetilde{\Gamma}_{x_0,\mathfrak{t}}}
\vert\widetilde{k}+\left\langle x \vert \nu \right\rangle
\vert^2  \widetilde{\rho} \, d\sigma \, d\mathfrak{t}<+\infty\,, 
$
then
$
\int_{\mathcal I} \int_{\widetilde{\Gamma}_{x_0,\mathfrak{t}}}
\vert\widetilde{k}+\left\langle x \vert \nu \right\rangle
\vert^2  \widetilde{\rho} \, d\sigma \, d\mathfrak{t}<+\infty\,.
$
Being the last integral finite and being the integrand a nonnegative 
function on a set of infinite Lebesgue measure,
we can extract within $\mathcal{I}$ a sequence of time $\mathfrak t_j\rightarrow+\infty$
such that
$$
\lim_{j\rightarrow +\infty} \int_{\widetilde{\Gamma}_{x_0,\mathfrak{t}_j} }
\vert\widetilde{k}+\left\langle x \vert \nu \right\rangle
\vert^2  \widetilde{\rho} \, d\sigma =0 
$$
It follows that , for every ball of radius $R$ in $\mathbb{R}^2$,
the networks $\widetilde{\Gamma}_{x_0,\mathfrak{t_j}}$ 
have curvature uniformly bounded in $L^2(B_R)$.\\
Hence, we can extract a subsequence
$\widetilde{\Gamma}_{x_0,\mathfrak{t} _{ j}}$ (not relabelled) which, after a 
possible reparametrization, converges to a limit (possibly empty) network
$\Gamma_{\infty}$ in the $C^1_{loc}$ topology.\\
We will call $\Gamma_{\infty}$ a blow-up limit of $\Gamma_t$
around the point $x_0$
(notice that the blow-up limit could depend on the choice of the subsequence).\\
If the limit set is not empty, it has multiplicity one thanks to Lemma~\ref{emb2}. 
Also the embeddedness is granted by Proposition~\ref{emb}.\\
This limit set satisfies the equation 
\begin{equation}\label{omotetiche}
k_\infty+\left\langle x\vert \nu \right\rangle =0\; 
\text{for all}\; x\in \Gamma_\infty\,,
\end{equation}
where $k_\infty$ is the curvature of $\Gamma_\infty$ at $x$,
because the integral 
$\int_{\widetilde{\Gamma}} \vert \widetilde{k}+
\left\langle x \vert \nu \right\rangle
\vert^2 \widetilde{\rho} \, d \sigma$ is lower semicontinuous
under the $C^1_{loc}$ convergence.\\
In principle, the limit set is composed of curves in $W^{2,2}_{loc}$,
but from the relation $k_\infty+\left\langle x \vert \nu \right\rangle =0$
it follows that $k_\infty$ is continuous, since the curves are in $C^1_{loc}$.
By a bootstrap argument we gain the smoothness of  $\Gamma_{\infty}$.\\
Now we classify all the possible limit sets that satisfy~\ref{omotetiche}.\\ 
By the work of Abresch and Langer \cite{abresch_langer}
the curves componing $\Gamma_{\infty}$ can be only lines, half-lines, 
segments or pieces of curves of Abresch and Langer.
If $k$ is identically zero we can only
have a straight line, a half-line or an infinite flat triod.
If the point $x_0\in \mathbb{R}^2$ is not the end point $P$, then $\Gamma_\infty$
has no end points, otherwise
if the point $x_0\in \mathbb{R}^2$ is the end-point $P$, the set $\Gamma_\infty$ 
is a half-line by the bound from below on the curve $\gamma^2$.\\
Otherwise, if the curvature is not constantly zero,
it is proved in~\cite[Theorem~3]{chen_guo} that
there exists a unique possible limit set without self-intersection
and with multiplicity one and it is  the Brakke spoon in Definition~\ref{BS}.\\
The value of gaussian density $\widehat{\Theta}(x_0)$ in the first three cases
can be obtained by a direct computation (see~\cite{magni_mantegazza_novaga2013}), 
for the last case see Lemma~8.4 in~\cite{ilmanen_neves_schulze2014}.\\
\end{proof}

\noindent We define the set of reachable points of the flow by
$$
R=\left\lbrace x\in \mathbb{R}^2 \vert\; \text{there exist}\, p_i\in \Gamma \;
\text{and} \,t_i\nearrow T \; \text{such that}
\lim_{i\longrightarrow\infty}F(p_i,t_i)=x \right\rbrace\,. 
$$

\noindent This set $R$ is not empty and compact,
if a point $x_0\notin R$, it means that the flow is definitively far from $x_0$, otherwise 
if $x_0\in R$, for every $t\in [0,T)$ the closed ball of radius $\sqrt{2(T-t)}$ and center $x_0$ 
intersects $\Gamma_t$
(for the proof see~\cite{magni_mantegazza2014}).
Hence we have two possibilities when we consider the blow-up around points of $\overline{\Omega}$:
\begin{itemize}
\item  we are rescaling around a point in $R$ and
the limit of any sequence of rescaled network is not empty;
\item the blow-up limit is empty.
\end{itemize}

\noindent Fixed $x_0\in R$, repeating the previous arguments, there exists 
a not relabeled subsequence $\mathfrak t_j\to+\infty$ of rescaled times such that the rescaled networks 
$\widetilde{\Gamma}_{x_0,\mathfrak t_{j}}$ converge in the $C^1_{loc}$ topology to a nonempty limit:
a straight line, a half-line, an infinite flat triod or a Brakke spoon.

\begin{prop}
If the sequence of rescaled networks converges to a straight line or to a half-line,
then the curvature of the evolving network is uniformly bounded for $t\in [0,T)$
in a ball around the point $x_0$.
If the sequence of rescaled networks converges to a infinite flat triod,
then the full $L^2$ norm of the curvature of the evolving network is bounded.
\end{prop}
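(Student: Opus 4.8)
The plan is to adapt to the present setting the scheme of~\cite{mantegazza_novaga_tortorelli2004} and~\cite{magni_mantegazza_novaga2013}: a flat blow-up limit carries the smallest possible Gaussian density, and a density close to such a value forces local regularity of the flow. First I would read off the density. If the rescaled networks $\widetilde{\Gamma}_{x_0,\mathfrak t_j}$ converge in the $C^1_{\rm loc}$ topology to a straight line, to a half-line, or to an infinite flat triod, then~\eqref{gggg} holds along the given sequence (because $\Theta(x_0,t)\to\widehat\Theta(x_0)$ as $t\to T$) and yields $\widehat\Theta(x_0)=1$, $1/2$ or $3/2$, respectively; moreover, recalling the discussion in Proposition~\ref{resclimit}, a half-line limit can occur only if $x_0=P$, and a triod limit only if the $3$-point of $\Gamma_t$ converges to $x_0$. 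I would then record that $\widehat\Theta$ is upper semicontinuous (a routine consequence of the monotonicity formula and of Lemma~\ref{stimadib}), so that, given any $\varepsilon>0$, one has $\widehat\Theta\le\widehat\Theta(x_0)+\varepsilon$ on a ball around $x_0$; this is the smallness of the density near $(x_0,T)$ needed below.

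Next I would invoke the local ($\varepsilon$-)regularity theorem for the curvature flow of networks, in the three versions relevant here — at an interior point (flat model: the line), at the fixed end point $P$ (flat model: the half-line, the uniform lower bound on the length of $\gamma^2$ preventing any further sheet from appearing), and at the $3$-point (flat model: the triod) — exactly as in~\cite{mantegazza_novaga_tortorelli2004, magni_mantegazza_novaga2013, ilmanen_neves_schulze2014}. In the line and half-line cases this gives a uniform bound on $|k|$ and on its space derivatives in a space-time neighbourhood of $(x_0,T)$; together with the smoothness and the curvature bounds on $[0,T-\delta]$, it yields $\sup_{\overline{B_\rho(x_0)}\times[0,T)}|k|<+\infty$, which is the first assertion. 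In the triple-junction case the corresponding estimate gives instead only a uniform-in-$t$ bound on $\int_{B_\rho(x_0)\cap\Gamma_t}k^2\,ds$, and one more step is needed.

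To conclude the triod case I would globalise this bound. The point $x_0$, being the limit of the $3$-point, is the unique point of the reachable set $R$ at which the homothetic blow-up can fail to be a line or a half-line: any other such point would carry a triple junction in its blow-up — directly for a triod, and because a Brakke spoon contains one — hence the $3$-point of $\Gamma_t$ would converge there too, contradicting that the spoon has only one $3$-point. Thus, by upper semicontinuity of $\widehat\Theta$, there is $\rho>0$ with $\widehat\Theta\le1$ on $R\setminus B_{\rho/2}(x_0)$, so the blow-up at every $z\in R\setminus B_{\rho/2}(x_0)$ is a line or a half-line and the previous step bounds $|k|$ uniformly near $z$. Covering the compact set $R$ by $B_\rho(x_0)$ and finitely many such balls, and using that $\Gamma_t$ lies in an arbitrarily small neighbourhood of $R$ for $t$ close to $T$ (a point outside $R$ is definitively avoided by the flow) while $L(t)$ is non-increasing, one gets for every $t\in[0,T)$
\[
\int_{\Gamma_t}k^2\,ds=\int_{B_\rho(x_0)\cap\Gamma_t}k^2\,ds+\int_{\Gamma_t\setminus B_\rho(x_0)}k^2\,ds\le C_1+C_2\,L(0)<+\infty\,,
\]
which is the second assertion.

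The hard part is the $\varepsilon$-regularity statement at the $3$-point: one has to deduce an $L^2$-bound on the curvature at the junction from the smallness of the Gaussian density ratios (equivalently, from $C^1$-proximity to a static triod at all small parabolic scales) near $(x_0,T)$, controlling at the same time the motion of the $3$-point and the persistence of the $120^\circ$ angle condition under the parabolic interior estimates; this is the analogue of the regularity results in~\cite{mantegazza_novaga_tortorelli2004} and~\cite{ilmanen_neves_schulze2014}, and is where essentially all the analytic work lies. The auxiliary facts used above — upper semicontinuity of $\widehat\Theta$ and the concentration of $\Gamma_t$ on $R$ as $t\to T$ — are easy consequences of the monotonicity formula and of the definition of $R$, and should be established first.
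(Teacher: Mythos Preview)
Your proposal is correct and coincides with the paper's approach: the paper's own ``proof'' is nothing more than a reference to~\cite{magni_mantegazza_novaga2013}, Propositions~3.2 and~3.3 and the proof of Theorem~2.4, and what you have written is a faithful outline of precisely that scheme --- read off $\widehat\Theta(x_0)$ from the flat limit, use upper semicontinuity of $\widehat\Theta$ together with the local $\varepsilon$-regularity theorems (interior, end-point, and triple-junction versions) to get curvature control near $x_0$, and in the triod case globalise by showing every other reachable point has a line or half-line blow-up and covering. One minor remark: in~\cite{magni_mantegazza_novaga2013} the triod regularity in fact yields $L^\infty$ curvature bounds near the junction as well, not merely an $L^2$ bound, so your decomposition in the last display is slightly weaker than what is actually available --- but this does not affect the conclusion.
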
 

\begin{proof}
See~\cite{magni_mantegazza_novaga2013}, Propositions~3.2 and 3.3 and the proof of Theorem~2.4.
\end{proof} 

{\it Proof of Theorem \ref{thmain}.\;}
As we have imposed that the length of the non-closed curve $\gamma^2$ 
stays bounded away from zero during the evolution,
at the time $T$ of maximal existence of a smooth flow, only the second
case of Theorem~\ref{infinitecurvature} can happen, that is 
$\limsup_{t\rightarrow T} \int_{\Gamma_t}k^2 ds = +\infty\,.$
By Proposition~\ref{resclimit} we know that, for every $x_0\in\mathbb{R}^2$,
there exists a sequence of rescaled times such that the sequence of
corresponding rescaled networks converges
to a limit set $\Gamma_\infty$, which, if not empty, 
is a half-line from the origin, a straight line through the origin, an infinite flat triod centered 
at the origin or a Brakke spoon.
If for every $x_0\in R$ the limit set $\Gamma_\infty$ is a half-line, a 
straight line or an infinite flat triod, then the curvature is uniformly 
bounded as $t\to T$ thanks to the previous proposition.
This is in contradiction with Theorem~\ref{infinitecurvature}.
Hence, it exists a point $x_0\in R$ such that the limit set $\Gamma_\infty$ is a Brakke spoon and at $x_0$ the gaussian density is
$\widehat{\Theta}(x_0)>3/2$.
This proves the main Theorem.
\qed

\begin{cor}
If the length of the non-closed curve $\gamma^2$ is positively bounded 
from below,
at the maximal time of existence $T<+\infty$ of a smooth solution
$\lim_{t \rightarrow T} A(t)=0 $, $\liminf_{t \rightarrow T} L_1(t)=0$
and $\limsup_{t \rightarrow T} \int_{\gamma^1} k^2\,ds=+\infty\,.$
Moreover $T=\frac{3A_0}{5\pi}$ where $A_0$ is the initial area of the region contained in the closed curve $\gamma^1$.
\end{cor}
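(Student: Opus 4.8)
The plan is to deduce everything from the main Theorem~\ref{thmain} together with the a priori information already collected. Since the length of $\gamma^2$ is assumed bounded away from zero, Theorem~\ref{thmain} tells us that at the maximal time $T<+\infty$ there is a point $x_0\in R$ around which the rescaled networks subconverge to a Brakke spoon, so in particular the closed curve $\gamma^1$ does not persist with bounded curvature up to time $T$. First I would record that $T<+\infty$: this is Remark~\ref{0001}, which shows that the area of the loop decreases at the constant rate $5\pi/3$ by~\eqref{area}, hence cannot stay positive forever, and that $T\le 3A_0/(5\pi)$. Since the smooth flow exists exactly as long as $A(t)>0$ (otherwise, by~\eqref{curvlen}, the $L^2$ norm of the curvature of $\gamma^1$ blows up and the flow cannot be smooth), and since $A(t)=A_0-\tfrac{5\pi}{3}t$ by~\eqref{area}, the flow reaches the time where $A(t)=0$, i.e.\ $T=3A_0/(5\pi)$, and $\lim_{t\to T}A(t)=0$.

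Next I would show $\liminf_{t\to T}L_1(t)=0$. From the isoperimetric-type inequality one has $A(t)\le \tfrac{1}{4\pi}L_1(t)^2$ (the loop $\gamma^1$ bounds the region of area $A(t)$), or more simply one uses~\eqref{curvlen}: if $L_1$ stayed bounded below, then $\int_{\gamma^1}k^2\,ds$ would stay bounded, but we also need to rule out the curvature blowing up. A cleaner route: since $A(t)\to 0$ and $A(t)\le \tfrac{1}{4\pi}(L_1(t))^2$ by the isoperimetric inequality applied to the Jordan curve $\gamma^1$, we immediately get $L_1(t)^2\ge 4\pi A(t)$—that is the wrong direction. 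Instead I would argue directly: by Proposition~\ref{pluto1000}, if $L_1$ were bounded below and $\int_{\Gamma_t}k^2\,ds$ bounded, the flow would extend smoothly past $T$, contradicting maximality; and $L_2$ is bounded below by hypothesis, so by Theorem~\ref{infinitecurvature} we must have $\limsup_{t\to T}\int_{\Gamma_t}k^2\,ds=+\infty$. The curvature blow-up, by Proposition~\ref{resclimit} and the proof of Theorem~\ref{thmain}, is concentrated at a point where the blow-up is a Brakke spoon, whose singular part is the shrinking loop; hence the blow-up occurs on $\gamma^1$, giving $\limsup_{t\to T}\int_{\gamma^1}k^2\,ds=+\infty$. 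Combined with~\eqref{curvlen}, which gives $\int_{\gamma^1}k^2\,ds\ge \tfrac{25\pi^2}{9L_1(t)}$, this forces $\liminf_{t\to T}L_1(t)=0$.

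The main obstacle is making rigorous the claim that the curvature blow-up lives on the closed curve $\gamma^1$ rather than on $\gamma^2$. The hypothesis that $L_2$ stays bounded below only prevents $\gamma^2$ from shrinking to a point; a priori one might fear curvature concentration somewhere along $\gamma^2$ without length collapse. Here I would invoke the blow-up analysis of Proposition~\ref{resclimit}: around any $x_0\in R$ the only possible non-flat blow-up is the Brakke spoon, and in the Brakke spoon the curvature (in the equation $k+\langle x\mid\nu\rangle=0$) is supported on the compact Abresch--Langer loop, while the half-line component is flat. Together with the fact from Remark~\ref{0001} and~\eqref{curvlen} that the blow-up must be non-flat (else, by the Proposition before the proof of Theorem~\ref{thmain}, the curvature would be bounded, contradicting $\limsup\int_{\Gamma_t}k^2=+\infty$), the concentration point must be a point of $\gamma^1$ and the blow-up rate statement~\eqref{curvlen} on $\gamma^1$ then yields $\limsup_{t\to T}\int_{\gamma^1}k^2\,ds=+\infty$. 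Finally, $T=3A_0/(5\pi)$ follows by integrating~\eqref{area} exactly as in Remark~\ref{0001}, since equality in~\eqref{stimaT} is forced by $A(T)=0$.
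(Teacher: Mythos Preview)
Your proposal contains two genuine gaps that prevent it from going through.

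First, in the opening paragraph you assert that ``the smooth flow exists exactly as long as $A(t)>0$''. Your parenthetical only justifies one direction: if $A(t)$ reaches zero then, by~\eqref{curvlen}, the curvature blows up. It does \emph{not} show the flow persists while $A(t)>0$; a priori the curvature could blow up at some $T$ with $A(T)>0$. So at this stage you have only the inequality $T\le 3A_0/(5\pi)$ of Remark~\ref{0001}, not equality, and $\lim_{t\to T}A(t)=0$ is not yet established.

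Second, and more seriously, your use of~\eqref{curvlen} at the end of the second paragraph is backwards. The inequality $\int_{\gamma^1}k^2\,ds\ge 25\pi^2/(9L_1)$ says that \emph{small $L_1$ forces large $\int_{\gamma^1}k^2$}; it does \emph{not} say that large $\int_{\gamma^1}k^2$ forces small $L_1$. A sequence with $L_1\ge\varepsilon>0$ and $\int_{\gamma^1}k^2\to\infty$ is perfectly compatible with~\eqref{curvlen}. Hence you cannot conclude $\liminf_{t\to T}L_1(t)=0$ from $\limsup_{t\to T}\int_{\gamma^1}k^2\,ds=+\infty$ this way.

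The paper's argument runs the implications in the opposite order, and this is what you should do. From Theorem~\ref{thmain} you know the rescaled networks $\widetilde{\Gamma}_{x_0,\mathfrak t_j}$ subconverge in $C^1_{\rm loc}$ to a Brakke spoon. The Brakke spoon's loop is compact, so along this subsequence the rescaled closed curve has uniformly bounded length; undoing the rescaling by the factor $\sqrt{2(T-t_j)}$ gives $L_1(t_j)\to 0$, hence $\liminf_{t\to T}L_1(t)=0$. The same rescaling argument (or the isoperimetric inequality $A\le L_1^2/(4\pi)$ applied along this subsequence, together with the linearity $A(t)=A_0-\tfrac{5\pi}{3}t$) gives $A(T)=0$, i.e.\ $T=3A_0/(5\pi)$. \emph{Then}~\eqref{curvlen}, used in the correct direction, yields $\limsup_{t\to T}\int_{\gamma^1}k^2\,ds=+\infty$ from $\liminf L_1=0$. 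Your attempt to localise the curvature blow-up to $\gamma^1$ via the blow-up classification becomes unnecessary once the argument is ordered this way.
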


\begin{proof}
From the equation~\eqref{area} of the evolution of the area
enclosed in the loop, we know that $A(t)$ is decreasing.
If the length of the non-closed curve $\gamma^2$ stays positively bounded 
from below during all the smooth evolution of the flow and we know that at time $T<+\infty$
it appears a singularity, then from Theorem \ref{thmain}, the rescaled flow 
converges, up to subsequence, to a Brakke spoon.
This implies that the area in the loop goes to zero and 
$\liminf_{t \rightarrow T} L_1(t)=0$.
Recalling the inequality \eqref{curvlen}
we also get that $\limsup_{t \rightarrow T} \int_{\gamma^1} k^2\,ds=+\infty\,.$
Moreover, thanks to the equation
$
\frac{\partial A}{\partial t}=\frac{-5\pi}{3}\,,
$ 
the time in which such singularity appears is given by 
\begin{equation*}\label{singtime}
T=\frac{3A_0}{5\pi}\,.
\end{equation*}
\end{proof}

\end{document}